\documentclass{amsart}

\newtheorem{thm}{Theorem}[section]
\newtheorem{lemma}[thm]{Lemma}
\usepackage{color}
\theoremstyle{definition}
\newtheorem{dn}[thm]{Definition}

\newtheorem{md}[thm]{Proposition}

\theoremstyle{remark}
\newtheorem{remark}[thm]{Remark}

\numberwithin{equation}{section}



\begin{document}
\title{Calder\'on's problem for some classes of conductivities in circularly symmetric domains}
\author[Mai]{Mai Thi Kim Dung}
\address{Department of Mathematics, University of Science-VNU, 334 Nguyen Trai, Thanh Xuan Hanoi}
\email{maithikimdungt59@hus.edu.vn}

\author[Dang]{Dang Anh Tuan}
\address{Department of Mathematics, University of Science-VNU, 334 Nguyen Trai, Thanh Xuan Hanoi}
\email{datuan1105@gmail.com}
\thanks{Part of this work was done when the second author visited the Vietnam Institute for Advanced Study in Mathematics (VIASM)  whom we thank for support and hospitality. We thank N.A.Tu for useful conversation and the referee for helpfull comments.}

\subjclass[2000]{Primary 35J15, 35J25, 35R30}

\keywords{Inverse boundary problems, Dirichlet-to-Neumann map, Calder\'on problem, Lipschitz stability, Reconstruction}

\maketitle 
\begin{abstract}
 In this note, we study Calder\'on's problem for certain classes of  conductivities in domains with circular symmetry in two and three dimensions. Explicit formulas are obtained for the reconstruction of the conductivity from the Dirichlet-to-Neumann map. As a consequence, we show that the reconstruction is Lipschitz stable.
\end{abstract}
\section{Introduction}
Consider a conductor in a domain $\Omega\subset \mathbb{R}^n$ with conductivity $\gamma(x).$ When a voltage potential $f\in {H^{\frac{1}{2}}}(\partial \Omega )$ is applied at the boundary $\partial \Omega$, the induced potential $u$ in $\Omega$ is the unique weak solution in $H^1(\Omega)$ of
\begin{align}
\begin{cases}
\nabla  \cdot (\gamma \nabla u)  = 0 & \mbox{ in }\Omega,\\
u = f & \mbox{ on }\partial \Omega.
\end{cases}
\end{align}
The Dirichlet-to-Neumann map is given by $\Lambda _\gamma(f)= \gamma {\partial _\nu }u|_{\partial \Omega}\in {H^{ - \frac{1}{2}}}(\partial \Omega ).$
Here $\nu$ denotes the exterior unit normal to $\partial \Omega.$
The problem studied by Calder\'on in \cite{C} is to determine the conductivity $\gamma$ from  $\Lambda _\gamma$. 

For $n\ge 3$ and $\gamma \in C^2$, that $\Lambda _\gamma$ uniquely determine $\gamma$ was proved by Sylvester and Uhlmann in \cite{SU}. Recently, based on the breakthrough work by Haberman and Tataru \cite{HT}, Caro and Rogers \cite{CR} proved uniqueness for Lipschitz conductivities. There is also related work by Haberman \cite{H}.

In two dimension, and $C^2$ conductivities, the uniqueness was proved by Nachman \cite{N}. Later, Astala and P\"aiv\"arinta   \cite{AP} proved uniqueness for bounded measurable conductivities.

After the uniqueness has been established, it is natural to study the stability of the reconstruction, i.e., we would like to estimate $\gamma_1-\gamma_2$ in certain norm by 
$$||\Lambda_1-\Lambda_2||_\star=\sup\limits_{f\in H^{\frac{1}{2}}(\partial\Omega)\atop f\not=0}\dfrac{||(\Lambda_1-\Lambda_2)f||_{H^{-\frac{1}{2}}(\partial\Omega)}}{||f||_{H^{\frac{1}{2}}(\partial \Omega)}}.$$
In \cite{A0}, Alessandrini proved that the following log-stability estimate hold
$$||\gamma_1-\gamma_2||_{L^\infty(\Omega)}\le C\big(\log(1+||\Lambda_1-\Lambda_2||_\star^{-1})\big)^{-\sigma}$$
where $C, \sigma$ are positive constants and $\gamma_j\in H^{s+2}(\Omega), s>n/2$. Later, Mandache \cite{M} showed that such estimate is optimal.

To improve the stability estimate, Alessandrini and Vessella \cite{A} consider the special classes of piecewise constant conductivities, for $n\ge 3$. The  Lipschitz stability obtained therein has been generalized to other classes of conductivities in \cite{AH}, \cite{BF} and \cite{GS}. 

The analog of the result of \cite{A} was proved for the two dimensional case in \cite{BBR}. Subsequent generalizations of this result are obtained in \cite{BFR} and \cite{Clop}. 

In this paper, we proved Lipschitz stability estimate for two special cases of domains with circular symmetry. 
In the first case, we consider $\Omega=B(0,1)\subset \mathbb{R}^2$  with  conductivities of the form
\begin{align}\label{dung}
{\gamma_\alpha (x)=}\begin{cases}
\alpha_1+\alpha_2(a-r)&  \mbox{ if } 0 \le r<a,  \\
\alpha_0 &\mbox{ if } a\le r <1,
\end{cases}
\end{align}
where $r=|x|$ and $ \varepsilon_0\le \alpha_0, \alpha_1\le M, $ $0\le \alpha_2 \le N.$ We denote this set of conductivities $\mu(a, \varepsilon_0, M,N).$\\
In the second case, we consider $\Omega=B(0,1)\times (0,+\infty)\subset \mathbb{R}^3$ with  conductivities of the form
\begin{align}\label{me}
{\gamma_\alpha (z)=}\begin{cases}
1+{\alpha_1}  &  \mbox{ if }h \le z < \infty,\\
1+ {\alpha_2}   &\mbox{ if } 0 \le z < h ,
\end{cases}
\end{align}
where ${\alpha _j} \in \left[ {0,M} \right],j = 1,2,M > 0,h>0$. We denote this set of conductivities $\mu(h,M).$\\
We give a formula for the Dirichlet-to-Neumann map in each case, together with a formula to recover the conductivity from the Dirichlet-to-Neumann map. As a consequence, we show that the map $\Lambda_{\gamma} \mapsto \gamma$ is Lipschitz.  More precisely our main results are as follows.
\begin{thm}\label{7}
Let $\Omega=B(0, 1)$ and $a \in (0,1), \varepsilon_0, M>0, N\ge 0$. There exists a positive constant $C=C(a, \varepsilon_0,M,N)$ such that
$${\left\| \Lambda _\alpha - \Lambda _\beta \right\|_ \star } \ge C\left( {\left| {{\alpha _0} - {\beta _0}} \right| + \left| {{\alpha _1} - {\beta _1}} \right| + \left| {{\alpha _2} - {\beta _2}} \right|} \right), \forall {\gamma _\alpha },{\gamma _\beta } \in \mu (a,{\varepsilon _0},M,N).$$
\end{thm}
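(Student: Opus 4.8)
The plan is to exploit the circular symmetry to diagonalise the whole problem in the Fourier basis $\{e^{in\theta}\}_{n\in\mathbb{Z}}$ on $\partial\Omega=S^1$, reducing the operator inequality to a scalar question about eigenvalues. Since $\gamma_\alpha$ depends only on $r=|x|$, the ansatz $u=R_n(r)e^{in\theta}$ turns $\nabla\cdot(\gamma_\alpha\nabla u)=0$ into the ODE $(r\gamma_\alpha R_n')'-\tfrac{n^2}{r}\gamma_\alpha R_n=0$, and the map acts diagonally, $\Lambda_\alpha e^{in\theta}=\lambda_n(\alpha)e^{in\theta}$, with $\lambda_n(\alpha)=\alpha_0 R_n'(1)/R_n(1)$ for the solution regular at the origin. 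Writing the $H^{\pm1/2}(S^1)$ norms through Fourier coefficients and using that $\Lambda_\alpha-\Lambda_\beta$ is a diagonal multiplier, I would first establish
$$\|\Lambda_\alpha-\Lambda_\beta\|_\star=\sup_{n\ge1}\frac{|\lambda_n(\alpha)-\lambda_n(\beta)|}{\sqrt{1+n^2}},$$
the mode $n=0$ contributing nothing since constants give $\lambda_0\equiv0$. This converts the theorem into the statement that $\alpha\mapsto(\lambda_n(\alpha))_n$ is \emph{Lipschitz invertible} on the parameter box $K=[\varepsilon_0,M]^2\times[0,N]$.

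Next I would compute the eigenvalues explicitly, which is exactly the reconstruction formula announced in the introduction. On the outer annulus $a\le r\le1$ the conductivity is the constant $\alpha_0$, so the equation is an Euler equation with $R_n=A r^{n}+B r^{-n}$, and evaluation at $r=1$ gives $\lambda_n(\alpha)=\alpha_0 n\frac{1-t}{1+t}$ with $t=B/A$. The ratio $t$ is fixed by matching $R_n$ and the flux $\gamma_\alpha R_n'$ across $r=a$, so $\lambda_n$ is determined by the inner logarithmic-flux datum $P_n:=a\alpha_1 R_n'(a)/R_n(a)$. On the inner disc $\gamma_\alpha=(\alpha_1+\alpha_2 a)-\alpha_2 r$ is affine, and the substitution $\tau=\alpha_2 r/(\alpha_1+\alpha_2 a)$, $R_n=\tau^{n}W$ reduces the inner ODE to the Gauss hypergeometric equation
$$\tau(1-\tau)W''+[(2n+1)-(2n+2)\tau]W'-nW=0,$$
whose solution regular at the origin is $\,{}_2F_1(p,q;2n+1;\tau)$ with $p+q=2n+1$, $pq=n$. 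Thus $P_n$, and hence $\lambda_n(\alpha)$, is a closed (if transcendental) function of $(\alpha_0,\alpha_1,\alpha_2)$.

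With the formulas in hand, the lower bound follows from a non-degeneracy argument. I would fix three modes (for instance $n=1,2,3$) and consider $F\colon K\to\mathbb{R}^3$, $F(\alpha)=(\lambda_1(\alpha),\lambda_2(\alpha),\lambda_3(\alpha))$. The goal is to show $F$ is injective and extends to a $C^1$ map whose Jacobian is nonsingular at every point of the compact convex box $K$; a standard compactness argument then yields $c>0$ with $\|F(\alpha)-F(\beta)\|\ge c\|\alpha-\beta\|$ for all $\alpha,\beta\in K$, and combined with the displayed norm identity this gives $\|\Lambda_\alpha-\Lambda_\beta\|_\star\ge c'(|\alpha_0-\beta_0|+|\alpha_1-\beta_1|+|\alpha_2-\beta_2|)$. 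The response to $\alpha_0$ is transparent from the leading asymptotics $\lambda_n\sim\alpha_0 n$, so $\partial_{\alpha_0}\lambda_n$ is under control.

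The main obstacle is precisely this non-degeneracy, namely separating the two inner parameters $\alpha_1$ and $\alpha_2$: both enter $\lambda_n$ only through $P_n$, i.e. through a ratio of the hypergeometric function and its derivative at $\tau=\alpha_2 a/(\alpha_1+\alpha_2 a)$. Establishing that the vectors $(\partial_{\alpha_1}\lambda_{n_i},\partial_{\alpha_2}\lambda_{n_i})$ genuinely vary with the mode $n_i$ — equivalently, that the $3\times3$ Jacobian stays bounded away from singular uniformly over $K$ — requires quantitative monotonicity and derivative estimates for these hypergeometric ratios. Alternatively one can run a triangular scheme, recovering $\alpha_0$ from high-mode asymptotics, then $\alpha_1$ (the inner flux at $r=a$, where $\gamma_\alpha=\alpha_1$), then $\alpha_2$ from the residual; the analytic heart is the same estimate, and that is where I expect the real work to lie.
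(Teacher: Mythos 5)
Your setup is sound and coincides with the paper's: the Fourier diagonalization, the exact operator-norm identity for diagonal multipliers (with the $n=0$ mode dropping out), and the explicit computation of $\lambda_n(\alpha)$. Indeed your hypergeometric reduction $\tau(1-\tau)W''+[(2n+1)-(2n+2)\tau]W'-nW=0$ is exactly the paper's power-series solution on the inner disc; the paper's function $B_n(b)$ is precisely your hypergeometric ratio evaluated at $\tau=b=a\alpha_2/(\alpha_1+a\alpha_2)$. But the proposal has a genuine gap, and you name it yourself: the non-degeneracy separating $\alpha_1$ from $\alpha_2$ is never established, and it is not a deferred technicality --- it is the entire content of the theorem. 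In the paper this is Proposition 2.2, in particular the two-sided derivative bounds $\frac{1-b_0}{2|n|+1}\le B_n'(b)\le\frac{A}{2|n|+1}$ together with $B_n(b)\to 1$ and $(2|n|+1)(B_n(b)-1)\to b/(1-b)$, followed by a three-case analysis: $\alpha_1$ enters the multiplier at order $O(1)$ (through $\frac{\alpha_1}{\alpha_0}B_n$), while $\alpha_2$ enters only at order $O\bigl(1/(2|n|+1)\bigr)$ (through $B_n'$), and the Lipschitz bound is extracted by playing these two scales of modes against each other. A proposal that stops at ``this is where I expect the real work to lie'' has not proved the statement.

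A second, more structural concern: your concrete plan --- fix modes $n=1,2,3$, prove $F=(\lambda_1,\lambda_2,\lambda_3)$ is injective with everywhere-nonsingular Jacobian on the box $K$, then invoke compactness --- is logically valid as a skeleton (that compactness argument does yield a uniform lower Lipschitz bound for a $C^1$ injective map with nonsingular differential on a compact convex set), but both hypotheses are unproven and there is no evidence that three \emph{fixed} low modes suffice. The paper's proof indicates the opposite: it isolates $\alpha_0$ using modes $n$ so large that $a^{2|n|}$ is small compared with $|\alpha_0-\beta_0|$ (a mode depending on the pair of conductivities, not fixed in advance), and it separates $\alpha_1$ from $\alpha_2$ by choosing modes $n_0<n_1,n_2$ according to the decay-rate comparison above. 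Establishing injectivity and uniform nonsingularity of your $F$ on all of $K$ would itself require exactly the quantitative estimates on the hypergeometric ratios that you postpone, so the plan defers the very step the theorem asks for rather than supplying an alternative to it.
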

\begin{thm}\label{47}
Let $\Omega=B(0, 1)\times (0, \infty)$ and $h \in (0,\infty), M>0$. There exists a positive constant $C=C(h,M)$ such that
$$\left\| \Lambda _\alpha  - \Lambda _\beta \right\|_{{H_{rad}^{\frac{1}{2}}(B)} \to {H_{rad}^{ - \frac{1}{2}}(B)}} \ge C\left( {\left| {{\alpha _2} - {\beta _2}} \right| + \left| {{\alpha _1} - {\beta _1}} \right|} \right),\forall \gamma _\alpha ,\gamma _\beta\in \mu(h,M).$$
\end{thm}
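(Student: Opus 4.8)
The plan is to reduce the three-dimensional transmission problem to a family of one-dimensional ODEs by separation of variables, diagonalise $\Lambda_\alpha$ explicitly, and then read off both coefficients from its spectrum: the high-mode asymptotics recover $\alpha_2$, while a single low mode recovers $\alpha_1$. Throughout write $\|T\|$ for $\|T\|_{H^{1/2}_{rad}(B)\to H^{-1/2}_{rad}(B)}$.

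First I would expand radial data in the radial Neumann eigenfunctions of the disc. Let $s_1<s_2<\cdots$ be the positive zeros of $J_1$, set $\phi_k(x)=J_0(s_k|x|)$ and $\lambda_k=s_k^2$ for $k\ge 1$, and $\phi_0\equiv 1$, $\lambda_0=0$; these form an orthogonal basis of $L^2_{rad}(B)$ satisfying $\partial_\nu\phi_k=0$ on $\partial B$. Writing $u(x,z)=\sum_k c_k(z)\phi_k(x)$ and using that $\gamma_\alpha$ depends only on $z$, the equation $\nabla\cdot(\gamma_\alpha\nabla u)=0$ decouples into $c_k''=\lambda_k c_k$ on each of the slabs $\{0\le z<h\}$ and $\{z\ge h\}$. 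Imposing decay as $z\to\infty$, continuity of $c_k$ and of the flux $\gamma_\alpha c_k'$ across $z=h$, and $c_k(0)=f_k$, a direct matching computation shows that $\Lambda_\alpha$ is diagonal in this basis:
\[
\Lambda_\alpha\phi_k=\mu_k(\alpha)\phi_k,\qquad \mu_k(\alpha)=(1+\alpha_2)\,s_k\,\frac{1-\rho\,e^{-2s_kh}}{1+\rho\,e^{-2s_kh}},\quad \rho=\frac{\alpha_2-\alpha_1}{2+\alpha_1+\alpha_2},
\]
for $k\ge 1$, and $\mu_0(\alpha)=0$ (the constant mode carries no information, consistently with $\Lambda_\alpha$ annihilating constants). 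This is the formula announced in the introduction; deriving it carefully, in particular the transmission and decay conditions, is the first technical step.

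Next I would pass from the operator norm to the individual eigenvalues. Since $\phi_k$ is a Laplace eigenfunction, $\|\phi_k\|_{H^s_{rad}(B)}=(1+\lambda_k)^{s/2}\|\phi_k\|_{L^2}$, so testing the difference on $f=\phi_k$ gives, for every $k\ge 1$,
\[
\|\Lambda_\alpha-\Lambda_\beta\|\ \ge\ \frac{|\mu_k(\alpha)-\mu_k(\beta)|}{\sqrt{1+s_k^2}}\ =:\ |F_k(\alpha)-F_k(\beta)|
\]
(the argument is insensitive to replacing these by equivalent norms, at the cost of the final constant). As $k\to\infty$ one has $s_k\to\infty$, $e^{-2s_kh}\to 0$ and $s_k/\sqrt{1+s_k^2}\to 1$, hence $F_k(\alpha)\to 1+\alpha_2$. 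As the left-hand side dominates every term, letting $k\to\infty$ yields the first half of the estimate, $\|\Lambda_\alpha-\Lambda_\beta\|\ge|\alpha_2-\beta_2|$.

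Finally I would recover $\alpha_1$ from the lowest mode $k=1$. Writing $q=e^{-2s_1h}\in(0,1)$ and differentiating $F_1(\alpha)=\tfrac{s_1}{\sqrt{1+s_1^2}}(1+\alpha_2)\tfrac{1-\rho q}{1+\rho q}$ in $\alpha_1$, the chain rule gives
\[
\partial_{\alpha_1}F_1(\alpha)=\frac{s_1}{\sqrt{1+s_1^2}}\,\frac{4q\,(1+\alpha_2)^2}{(1+\rho q)^2\,(2+\alpha_1+\alpha_2)^2},
\]
which is continuous and strictly positive on the compact set $[0,M]^2$ (note $|\rho|<1$, so $1+\rho q>0$), hence bounded below by some $c_0=c_0(h,M)>0$; likewise $|\partial_{\alpha_2}F_1|\le L(h,M)$ there. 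By the mean value theorem $|F_1(\alpha)-F_1(\beta)|\ge c_0|\alpha_1-\beta_1|-L|\alpha_2-\beta_2|$, and combining this with the two lower bounds (both controlled by $\|\Lambda_\alpha-\Lambda_\beta\|$) gives $|\alpha_1-\beta_1|\le\frac{1+L}{c_0}\|\Lambda_\alpha-\Lambda_\beta\|$, whence the claim with $C=(1+\tfrac{1+L}{c_0})^{-1}$. I expect the crux to be exactly this non-degeneracy: one must verify that a single low frequency genuinely \emph{sees} $\alpha_1$, i.e.\ that $\partial_{\alpha_1}F_1$ stays bounded away from $0$ uniformly over $\mu(h,M)$. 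The explicit formula makes this transparent, the only point to watch being that $\rho$ never drives $1+\rho q$ to zero, which holds because $|\rho|<1$.
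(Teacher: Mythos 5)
Your overall strategy is exactly the paper's: separate variables in the cylinder, diagonalise $\Lambda_\alpha$ on radial Fourier--Bessel modes, test $\Lambda_\alpha-\Lambda_\beta$ on single modes, recover $\alpha_2$ from the high-mode limit, and recover $\alpha_1$ from the lowest mode via a nondegeneracy estimate (the paper bounds the numerator $K_1$ directly where you use the mean value theorem, and it picks a large finite $n$ for the $\alpha_2$ step where you pass to the limit; these are cosmetic differences). However, there is a concrete error in your first step: you diagonalise in the wrong basis. The operator $\Lambda_\alpha$ of Theorem \ref{47} is defined through the mixed problem \eqref{1}, which carries the \emph{Dirichlet} condition $u=0$ on the lateral boundary $\partial B\times(0,\infty)$, and the spaces $H^{\pm\frac{1}{2}}_{rad}(B)$ are defined through the expansion in $J_0(\lambda_n r)$ with $\lambda_n$ the positive zeros of $J_0$. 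You instead expand in the Neumann eigenfunctions $\phi_k(x)=J_0(s_k|x|)$ with $s_k$ the zeros of $J_1$. Since the zeros of $J_0$ and $J_1$ strictly interlace, $J_0(s_k)\neq 0$, so your separated solutions $c_k(z)\phi_k(x)$ do \emph{not} vanish on $\partial B\times(0,\infty)$: they solve the cylinder problem with insulating (Neumann) lateral boundary, not problem \eqref{1}. Consequently the identity $\Lambda_\alpha\phi_k=\mu_k(\alpha)\phi_k$ is false for the theorem's operator; applying $\Lambda_\alpha$ to $\phi_k$ requires re-expanding $\phi_k$ in the $J_0(\lambda_n r)$ basis, and the image is not a multiple of $\phi_k$. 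Your parenthetical check that ``$\Lambda_\alpha$ annihilates constants'' is a symptom of the same substitution: the actual $\Lambda_\alpha$ does not annihilate constants, because the solution with data $f\equiv 1$ must vanish on the lateral wall and hence is not constant.

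The gap is mechanically fixable, and this is worth saying because everything after the diagonalisation is sound. Replace $s_k$ by $\lambda_n$ (zeros of $J_0$) throughout: the transmission computation in $z$ is unchanged and yields the paper's eigenvalues $\lambda_n A_n$ with $A_n$ as in \eqref{115}; the weight $\lambda_n/\sqrt{1+\lambda_n^2}$ plays the role of your $s_k/\sqrt{1+s_k^2}$; and your two estimates --- the high-mode limit giving $\|\Lambda_\alpha-\Lambda_\beta\|\ge|\alpha_2-\beta_2|$, and the uniform positivity of $\partial_{\alpha_1}F_1$ on $[0,M]^2$ (your formula for it is correct, and $1+\rho q$ indeed stays away from $0$ since $|\rho|<1$) combined with the mean value theorem --- go through verbatim, since they use nothing about the frequencies beyond positivity and divergence to infinity. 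With that substitution your proof becomes a correct and essentially identical variant of the paper's argument.
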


\section{Proof of Theorem \ref{7}}
Consider the Dirichlet problem  in the unit disc $B=B(0,1)$ on the plane
\begin{align}\label{01}
\begin{cases}
\nabla  \cdot (\gamma_\alpha \nabla u) = 0 & \mbox{ in }B, \\
\hspace{1.55cm}u = f &\mbox{ on }\partial B,
\end{cases}
\end{align}
where the conductivity $\gamma_\alpha\in \mu(a, \varepsilon_0, M, N)$.\\
In the polar coordinate, if $u(x)=\sum\limits_{n\in \mathbb{Z}} {u_n(r)e^{in\theta}} \in H^1(B)$ then the equation in \eqref{01} is 
\begin{equation}\label{02}
\left\{
\begin{aligned}
(\gamma_\alpha u'_{n})' + \frac{\gamma_\alpha }{r}u'_{n} - \frac{{{n^2}\gamma_\alpha }}{r^2}{u_n} &= 0, \forall n\in\mathbb Z,\nonumber \\
\mathop {\lim }\limits_{r \to {a^ - }} {u_n}(r) &= \mathop {\lim }\limits_{r \to {a^ + }} {u_n}(r),  \nonumber \\
\mathop {\lim }\limits_{r \to {a^ - }} \left( {\gamma {u'_n}} \right)(r)& = \mathop {\lim }\limits_{r \to {a^ + }} \left( {\gamma {u'_n}} \right)(r).
\end{aligned}
\right.
\end{equation}
Solving these systems, we obtain
$$u_0(r)=c_0, \hspace{0.5cm} 0\le r<1,$$
and for $n\not=0$,
\begin{align*}
{u_n(r)=}\begin{cases}
{b_n}{r^{\left| n \right|}} + {c_n}{r^{ - \left| n \right|}}&  \mbox{ if }a\le r <1, \\
\sum\limits_{k\ge |n|} {{a_k}{r^k}} ,&\mbox{ if }0< r <a,
\end{cases}
\end{align*}
where
\begin{align*}
{a_{|n| + m}} &= \frac{{{\alpha _2}}}{{{\alpha _1} + a{\alpha _2}}}\frac{{(2m - 1)\left| n \right| + m(m - 1)}}{{2m\left| n \right| + {m^2}}}{a_{\left| n \right| + m - 1}}\\
&={\left( {\frac{{{\alpha _2}}}{{{\alpha _1} + a{\alpha _2}}}} \right)^m}\prod\limits_{j = 1}^m {\frac{{(2j - 1)\left| n \right| + j(j - 1)}}{{2j\left| n \right| + {j^2}}}} {a_{\left| n \right|}}, m=1,2,\cdots.
\end{align*}

\noindent Note that $\alpha_2\ge 0, \alpha_1\ge\epsilon_0>0,$ the power series $\sum_{k\ge |n|}a_kr^k$ is uniformly convergent on $[0, a].$

\noindent From that we get
$$\frac{{{c_n}}}{{{b_n}}} = \frac{{{a^{2\left| n \right|}}\left[ {\left| n \right|{u_n}({a^ - }) - a\frac{{{\alpha _1}}}{{{\alpha _0}}}{u'_n}({a^ - })} \right]}}{{\left| n \right|{u_n}({a^ - }) + a\frac{{{\alpha _1}}}{{{\alpha _0}}}{u'_n}({a^ - })}}.$$
The Dirichlet-to-Neumann map $\Lambda_\alpha:{H^{\frac{1}{2}}}(\partial B) \to {H^{ - \frac{1}{2}}}(\partial B)$ is determined by
$$\Lambda_\alpha f(\theta) =\sum\limits_{n \in \mathbb{Z}} {\widehat {{\Lambda _{{\alpha }}}f}(n){e^{in\theta }}} $$
where $f(\theta)=\sum_{n\in\mathbb Z}\hat{f}(n)e^{in\theta}\in H^{\frac{1}{2}}(\partial B)$ and
\begin{align*}
\widehat {{\Lambda _{{\alpha }}}f}(n) &= \mathop {\lim }\limits_{r \to {1^ - }} {\gamma _\alpha }(r){u'_n}(r) = \alpha_0 \left| n \right|\widehat f(n)\frac{{{b_n} - {c_n}}}{{{b_n} + {c_n}}}\\
&= {\alpha _0}\left| n \right|\widehat f(n)\frac{{1 - {a^{2\left| n \right|}} + (1 + {a^{2\left| n \right|}})\frac{{{\alpha _1}}}{{{\alpha _0}}}{B_n}(b)}}{{1 + {a^{2\left| n \right|}} + (1 - {a^{2\left| n \right|}})\frac{{{\alpha _1}}}{{{\alpha _0}}}{B_n}(b)}},\forall n \in \mathbb{Z},
\end{align*}
$$B_n(b)= 1 + \frac{b}{{2\left| n \right| + 1}}\times \frac{{1 + \sum\limits_{m = 2}^\infty  {m{b^{m - 1}}{h_{m,n}}} }}{{1 + \frac{{\left| n \right|}}{{2\left| n \right| + 1}}b\left( {1 + \sum\limits_{m = 2}^\infty  {{b^{m - 1}}{h_{m,n}}} } \right)}}, $$
$${h_{m,n}} = \prod\limits_{j = 2}^m {\dfrac{{(2j - 1)\left| n \right| + j(j - 1)}}{{2j\left| n \right| + {j^2}}}},  b = \dfrac{{a{\alpha _2}}}{{{\alpha _1} + a{\alpha _2}}}.$$
Note that $0\le b\le b_0=\dfrac{{aN}}{{{\varepsilon _0} + aN}}<1.$\\
To obtain some properties of $B_n(b)$ we need the following technical lemma.
\begin{lemma}\label{6}
(i) $\mathop {\lim }\limits_{n \to \infty } \sum\limits_{m = 2}^\infty  {m{b^{m - 1}}{h_{m,n}}}  = \sum\limits_{m = 2}^\infty  {m{b^{m - 1}}\prod\limits_{j = 2}^m {\dfrac{{2j - 1}}{{2j}}} }={(1 - b)^{ - \frac{3}{2}}} - 1.$\\
(ii) $\mathop {\lim }\limits_{n \to \infty } \sum\limits_{m = 2}^\infty  {{b^{m - 1}}{h_{m,n}}}  = \sum\limits_{m = 2}^\infty  {{b^{m - 1}}\prod\limits_{j = 2}^m {\dfrac{{2j - 1}}{{2j}}} }=\dfrac{2}{{1 - b + \sqrt {1 - b} }} - 1.$
\end{lemma}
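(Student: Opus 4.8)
The plan is to prove each identity in two independent stages: first that the limit in $n$ may be taken term by term, and then that the resulting power series sums to the stated closed form. The second stage is a routine generating-function computation, so the real work is the interchange of limit and summation.

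For the termwise limit, fix $m$ and note that each factor of $h_{m,n}$ satisfies
$$\frac{(2j-1)|n|+j(j-1)}{2j|n|+j^2}\longrightarrow\frac{2j-1}{2j}\quad(n\to\infty),$$
so $h_{m,n}\to\prod_{j=2}^m\frac{2j-1}{2j}$ as $n\to\infty$ for each fixed $m$. To pass this through the infinite sum I would exhibit an $n$-uniform summable majorant. The key point is that $x\mapsto\frac{(2j-1)x+j(j-1)}{2jx+j^2}$ is \emph{increasing} on $(0,\infty)$: a one-line quotient-rule computation shows the numerator of its derivative collapses to the constant $j^2>0$, and the factor increases to its limit $\frac{2j-1}{2j}$ from below. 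Consequently $h_{m,n}\le\prod_{j=2}^m\frac{2j-1}{2j}$ for every $n$, which furnishes the majorants $m b^{m-1}\prod_{j=2}^m\frac{2j-1}{2j}$ in (i) and $b^{m-1}\prod_{j=2}^m\frac{2j-1}{2j}$ in (ii). Since $0\le b\le b_0<1$ and, by Stirling, $\prod_{j=2}^m\frac{2j-1}{2j}\sim 2/\sqrt{\pi m}$, both majorant series converge; dominated convergence then gives the first equality in each of (i) and (ii).

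For the closed forms I would reduce everything to central binomial coefficients. Setting $c_m=\prod_{j=1}^m\frac{2j-1}{2j}=\binom{2m}{m}4^{-m}$, one has the standard generating function $\sum_{m\ge 0}c_m x^m=(1-x)^{-1/2}$, and since $c_1=\tfrac12$ the limiting coefficient is $\prod_{j=2}^m\frac{2j-1}{2j}=2c_m$. For (ii) the sum becomes $\frac{2}{b}\sum_{m\ge 2}c_m b^m=\frac{2}{b}\big((1-b)^{-1/2}-1-\tfrac{b}{2}\big)$; writing $s=\sqrt{1-b}$ so that $b=(1-s)(1+s)$ and simplifying reduces this to $\frac{2}{s(1+s)}-1$, which is precisely $\frac{2}{1-b+\sqrt{1-b}}-1$. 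For (i) I would instead differentiate the generating function, $\sum_{m\ge 1}m c_m x^{m-1}=\tfrac12(1-x)^{-3/2}$, subtract the $m=1$ term $c_1=\tfrac12$, and multiply by $2$, obtaining $(1-b)^{-3/2}-1$.

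The only genuine obstacle is justifying the termwise passage to the limit; the rest is bookkeeping with the binomial series. The monotonicity of each factor in $|n|$ is what makes the dominating bound immediate and $n$-uniform, and I expect that monotonicity observation to be the crux of the argument.
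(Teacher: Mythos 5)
Your proposal is correct, and in fact there is nothing in the paper to compare it against: the paper states Lemma \ref{6} as a ``technical lemma'' and never proves it (its conclusions are simply invoked in the proof of Proposition \ref{5}), so your argument fills a genuine gap rather than paralleling an existing proof. Both of your key steps check out. The quotient-rule computation does collapse to the constant $j^2$: the derivative of $x\mapsto\frac{(2j-1)x+j(j-1)}{2jx+j^2}$ has numerator $(2j-1)(2jx+j^2)-2j\bigl((2j-1)x+j(j-1)\bigr)=j^2>0$, so each factor increases to $\frac{2j-1}{2j}$ and $h_{m,n}\le\prod_{j=2}^m\frac{2j-1}{2j}$ uniformly in $n$, which legitimizes the termwise limit by dominated convergence (note the crude bound $h_{m,n}\le 1$ together with $0\le b\le b_0<1$ would already give a summable majorant, so the Stirling asymptotics are not needed for this step). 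The generating-function bookkeeping is also right: with $c_m=\binom{2m}{m}4^{-m}$ one has $\prod_{j=2}^m\frac{2j-1}{2j}=2c_m$, and your two manipulations of $\sum_{m\ge 0}c_mx^m=(1-x)^{-1/2}$ (differentiation for (i), the substitution $s=\sqrt{1-b}$ for (ii)) reproduce $(1-b)^{-3/2}-1$ and $\frac{2}{1-b+\sqrt{1-b}}-1$ exactly. The only cosmetic point worth adding is the case $b=0$ in (ii), where you divide by $b$; there both sides vanish, so the identity holds trivially.
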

We have the following proposition:
\begin{md}\label{5}
$B_n$'s satisfy:
\begin{enumerate}
\item[(i)] $1 \le B_n(b) \le  d_0,$ where $d_0=1 + \frac{{{b_0}}}{{{{(1 - {b_0})}^{\frac{3}{2}}}}}.$
\item[(ii)] $\mathop {\lim }\limits_{n \to \infty } {B_n}(b) = 1.$
\item[(iii)] $\mathop {\lim }\limits_{n \to \infty } (2\left| n \right| + 1)({B_n}(b) - 1) = \dfrac{b}{{1 - b}}.$
\item[(iv)] $\mathop {\lim }\limits_{n \to \infty } \dfrac{{\dfrac{{{\alpha _1}}}{{{\alpha _0}}}{B_n}(b) - 1}}{{\dfrac{{{\alpha _1}}}{{{\alpha _0}}}{B_{n + 1}}(b) - 1}} = 1, b\not=0.$
\item[(v)] $\dfrac{{1 - {b_0}}}{{2\left| n \right| + 1}} \le {B'_n}(b) \le \dfrac{A}{{2\left| n \right| + 1}} \mbox{ where } A=A(a, \varepsilon_0,N) \mbox{ is a constant}.$
\end{enumerate}
\end{md}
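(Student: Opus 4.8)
The plan is to deduce all five assertions from two elementary structural facts and then feed them into Lemma~\ref{6}. First I would check that each factor of $h_{m,n}$ increases in $|n|$: writing $f_j(x)=\frac{(2j-1)x+j(j-1)}{2jx+j^2}$ one computes $f_j'(x)=\frac{j^2}{(2jx+j^2)^2}>0$, so $0\le h_{m,n}\le h_{m,\infty}:=\prod_{j=2}^{m}\frac{2j-1}{2j}$, the upper value being attained only in the limit. Hence any series with non-negative coefficients assembled from the $h_{m,n}$ and evaluated at some $b\ge0$ is dominated term by term by its $n\to\infty$ analogue, whose sum Lemma~\ref{6} supplies. Writing $T_n=\sum_{m\ge2}mb^{m-1}h_{m,n}$ and $S_n=\sum_{m\ge2}b^{m-1}h_{m,n}$, it is also convenient to record the compact form $B_n(b)-1=\frac{\Phi_n}{2|n|+1+|n|\Psi_n}$, obtained by clearing the inner fraction, with $\Phi_n=b(1+T_n)=\sum_{m\ge1}m\,h_{m,n}b^{m}$ and $\Psi_n=b(1+S_n)=\sum_{m\ge1}h_{m,n}b^{m}$ (so $h_{1,n}=1$).

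Parts (i)--(iii) then follow quickly. For (i), positivity of every summand gives $B_n\ge1$, while bounding the denominator below by $1$ and using $\frac{b}{2|n|+1}\le b_0$ together with $1+T_n\le(1-b)^{-3/2}\le(1-b_0)^{-3/2}$ (monotonicity and Lemma~\ref{6}(i)) gives $B_n\le 1+b_0(1-b_0)^{-3/2}=d_0$. For (ii) the prefactor $\frac{b}{2|n|+1}\to0$ while the remaining fraction stays bounded. For (iii) I would multiply through by $2|n|+1$ and pass to the limit with $\frac{|n|}{2|n|+1}\to\frac12$ and Lemma~\ref{6}; the substitution $u=\sqrt{1-b}$ collapses the limiting denominator $1+\frac{b}{1-b+\sqrt{1-b}}$ to $(1-b)^{-1/2}$, leaving $b(1-b)^{-3/2}/(1-b)^{-1/2}=\frac{b}{1-b}$.

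For (iv) I would split on whether $\alpha_1=\alpha_0$. If $\alpha_1\ne\alpha_0$, then by (ii) both numerator and denominator tend to the nonzero number $\frac{\alpha_1}{\alpha_0}-1$, so the ratio tends to $1$. If $\alpha_1=\alpha_0$, the ratio reduces to $\frac{B_n-1}{B_{n+1}-1}$, which I would write as $\frac{(2|n|+1)(B_n-1)}{(2|n+1|+1)(B_{n+1}-1)}\cdot\frac{2|n+1|+1}{2|n|+1}$; part (iii) sends the first factor to $1$ and the second clearly tends to $1$. This is exactly where $b\ne0$ is needed, to guarantee that the common limit $\frac{b}{1-b}$ from (iii) is nonzero.

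The substantive part is (v). Differentiating the compact form in $b$ and setting $\lambda_n=\frac{|n|}{2|n|+1}$ gives $(2|n|+1)B_n'=\frac{\Phi_n'+\lambda_n(\Phi_n'\Psi_n-\Phi_n\Psi_n')}{(1+\lambda_n\Psi_n)^2}$. The key algebraic observation is that the cross term is non-negative: expanding the Cauchy products and symmetrizing in the two indices gives $\Phi_n'\Psi_n-\Phi_n\Psi_n'=\frac12\sum_{m,k\ge1}(m-k)^2 h_{m,n}h_{k,n}b^{m+k-1}\ge0$, which at once proves $B_n$ is increasing. Discarding it yields $(2|n|+1)B_n'\ge\frac{\Phi_n'}{(1+\lambda_n\Psi_n)^2}$, and since $\Phi_n'\ge1$ (the $m=1$ term) and $1+\lambda_n\Psi_n\le1+\frac12\Psi_\infty=(1-b)^{-1/2}$, where $\Psi_\infty:=\lim_n\Psi_n=2\big((1-b)^{-1/2}-1\big)$ by Lemma~\ref{6}(ii), this is at least $1-b\ge1-b_0$. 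For the upper bound I would bound the denominator below by $1$ and dominate the numerator term by term by its $n\to\infty$ value (using $\lambda_n\le\frac12$ and $h_{m,n}\le h_{m,\infty}$), obtaining an explicit increasing function of $b$ whose value at $b_0$ furnishes $A=A(a,\varepsilon_0,N)$, since $b_0=\frac{aN}{\varepsilon_0+aN}$. I expect the main obstacle to be producing the clean lower constant $1-b_0$ rather than a crude one: that rests on the sharp estimate $1+\lambda_n\Psi_n\le(1-b)^{-1/2}$, which uses both $\lambda_n<\frac12$ and the exact limit of $\Psi_n$ from Lemma~\ref{6}(ii).
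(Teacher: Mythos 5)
Your proposal is correct and follows essentially the same route as the paper's proof: the same rewriting of $B_n$ as a quotient, the same appeal to Lemma~\ref{6} for (i)--(iv) including the two-case split in (iv), and for (v) the same key symmetrization showing the cross term $\Phi_n'\Psi_n-\Phi_n\Psi_n'$ has non-negative coefficients, combined with the bounds (numerator $\geq 1$, squared denominator $\le (1-b)^{-1}$) to get the constants $1-b_0$ and $A$. The only differences are organizational: you absorb the $m=1$ terms into the series so one symmetrization covers what the paper splits into several separately estimated pieces, and you prove explicitly the monotonicity $h_{m,n}\le\prod_{j=2}^{m}\frac{2j-1}{2j}$ that the paper uses without comment.
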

\begin{proof}
We rewrite $B_n(b)$ as follows
\begin{equation}\label{4}
{B_n}(b) = 1 + \frac{b}{{2\left| n \right| + 1}}\times \frac{{1 + \sum\limits_{m = 2}^\infty  {m{b^{m - 1}}{h_{m,n}}} }}{{1 + \frac{{\left| n \right|}}{{2\left| n \right| + 1}}b\left( {1 + \sum\limits_{m = 2}^\infty  {{b^{m - 1}}{h_{m,n}}} } \right)}}.
\end{equation}
(i) From \eqref{4} it is easy to see that $B_n(b)\ge 1.$\\
We now show that $B_n(b) \le  d_0.$ Indeed, using (i) in Lemma \ref{5} we have
\begin{align*}
B_n(b)&=\frac{{1 + \frac{{\left| n \right| + 1}}{{2\left| n \right| + 1}}b + \sum\limits_{m = 2}^\infty  {\frac{{\left| n \right| + m}}{{2\left| n \right| + 1}}{b^m}{h_{m,n}}} }}{{1 + \frac{{\left| n \right|}}{{2\left| n \right| + 1}}b + \sum\limits_{m = 2}^\infty  {\frac{{\left| n \right|}}{{2\left| n \right| + 1}}{b^m}{h_{m,n}}} }}\\
&\le 1 + b_0 + \sum\limits_{m = 2}^\infty  {m{b^m_0}}\prod\limits_{j = 2}^m {\dfrac{{2j - 1}}{{2j}}}  = d_0.
\end{align*}
(ii) From \eqref{4} it is not difficult to get $\mathop {\lim }\limits_{n \to \infty } {B_n}(b) = 1$.\\
(iii) We have
 \begin{equation*}
\left( {2\left| n \right| + 1} \right)\left( {{B_n}(b) - 1} \right) = \frac{{b\left( {1 + \sum\limits_{m = 2}^\infty  {m{b^{m - 1}}{h_{m,n}}} } \right)}}{{1 + \frac{{\left| n \right|}}{{2\left| n \right| + 1}}b\left( {1 + \sum\limits_{m = 2}^\infty  {{b^{m - 1}}{h_{m,n}}} } \right)}}.
 \end{equation*}
Hence, from  Lemma \ref{6} we obtain
$$\mathop {\lim }\limits_{n \to \infty } \left( {2\left| n \right| + 1} \right)({B_n}(b) - 1) = \frac{{b{{(1 - b)}^{ - \frac{3}{2}}}}}{{{{(1 - b)}^{ - \frac{1}{2}}}}} = \frac{b}{{1 - b}}.$$
(iv) We consider two cases:\\
$\star$ \underline{\textbf{Case}} $1$: $\alpha_0\ne \alpha_1$. \\
From (ii) we have 
$$\mathop {\lim }\limits_{n \to \infty } \dfrac{{\dfrac{{{\alpha _1}}}{{{\alpha _0}}}{B_n}(b) - 1}}{{\dfrac{{{\alpha _1}}}{{{\alpha _0}}}{B_{n + 1}}(b) - 1}} = \dfrac{{\dfrac{{{\alpha _1}}}{{{\alpha _0}}} - 1}}{{\dfrac{{{\alpha _1}}}{{{\alpha _0}}} - 1}}=1.$$
$\star$ \underline{\textbf{Case}} $2$: $\alpha_0=\alpha_1.$\\
We need to prove $\mathop {\lim }\limits_{n \to \infty } \dfrac{{{B_n}(b) - 1}}{{{B_{n + 1}}\left( b \right) - 1}} = 1$. From (iii) we get
$$\mathop {\lim }\limits_{n \to \infty } \frac{{{B_n}(b) - 1}}{{{B_{n + 1}}(b) - 1}} =\mathop {\lim }\limits_{n \to \infty } \frac{{(2\left| n \right| + 1)\left( {{B_n}(b) - 1} \right)}}{{(2\left| n \right| + 3)\left( {{B_{n + 1}}(b) - 1} \right)}} =  1.$$
(v) We denote by $M_n(b)$ and $N_n(b)$ the numerator and denominator of $B'_n(b)$, respectively. Direct computation gives
\begin{equation}
{M_n}(b) = \frac{1}{{2\left| n \right| + 1}} + \sum\limits_{m = 2}^\infty  {\frac{{\left| n \right|{{(m - 1)}^2}}}{{{{(2\left| n \right| + 1)}^2}}}} {b^m}{h_{m,n}} + \sum\limits_{m = 2}^\infty  {\frac{{{m^2}}}{{2\left| n \right| + 1}}} {b^{m - 1}}{h_{m,n}}+ {I_n}(b), 
\end{equation}
where
$$I_n(b)=  \left( {\sum\limits_{l = 2}^\infty  {\frac{{{l^2}{b^{l - 1}}}}{{2\left| n \right| + 1}}{h_{l,n}}} } \right)\left( {\sum\limits_{k = 2}^\infty  {\frac{{\left| n \right|{b^k}}}{{2\left| n \right| + 1}}{h_{k,n}}} } \right) - \left( {\sum\limits_{l = 2}^\infty  {\frac{{l\left| n \right|{b^{l - 1}}}}{{2\left| n \right| + 1}}{h_{l,n}}} } \right)\left( {\sum\limits_{k = 2}^\infty  {\frac{{k{b^k}}}{{2\left| n \right| + 1}}{h_{k,n}}} } \right).$$
The coefficient of $b^m$ in $I_n(b)$ is:
 \begin{align*}
\sum\limits_{k + l -1= m}  {\left( {\frac{{{l^2}\left| n \right|}}{{{{\left( {2\left| n \right| + 1} \right)}^2}}} - \frac{{l\left| n \right|k}}{{{{\left( {2\left| n \right| + 1} \right)}^2}}}} \right){h_{l,n}}{h_{k,n}}} & = \sum\limits_{k + l-1 = m}  {\frac{{l\left| n \right|(l - k)}}{{{{\left( {2\left| n \right| + 1} \right)}^2}}}{h_{l,n}}{h_{k,n}}}\\
 &=\frac{1}{2}\sum\limits_{k + l-1 = m}  {\frac{{\left| n \right|{{(k - l)}^2}}}{{{{\left( {2\left| n \right| + 1} \right)}^2}}}{h_{l,n}}{h_{k,n}}}.
 \end{align*}
From this we obtain 
\begin{equation}\label{13}
{M_n}(b) \ge \frac{1}{{2\left| n \right| + 1}}.
\end{equation}
Moreover, we have 
\begin{align}\label{9}
\sum\limits_{m = 2}^\infty  {\frac{{\left| n \right|{{(m - 1)}^2}}}{{{{(2\left| n \right| + 1)}^2}}}} {b^m}{h_{m,n}} &\le \frac{1}{{2(2\left| n \right| + 1)}}\sum\limits_{m = 0}^\infty  {{m^2}{b^m}} \nonumber \\
 &= \frac{1}{{2(2\left| n \right| + 1)}} \frac{{b(b + 1)}}{{{{(1 - b)}^3}}} \le \frac{1}{{2(2\left| n \right| + 1)}} \frac{{{b_0}({b_0} + 1)}}{{{{(1 - {b_0})}^3}}}.
\end{align}
Next, we have
\begin{align}\label{11}
\nonumber
\sum\limits_{m = 2}^\infty  {\frac{{{m^2}}}{{2\left| n \right| + 1}}} {b^{m - 1}}{h_{m,n}}& \le \frac{1}{2({2\left| n \right| + 1})}\sum\limits_{m = 0}^\infty  {{m^2}} {b^{m - 1}}\\
 &\le \frac{1}{{2(2\left| n \right| + 1)}} \frac{{{b_0} + 1}}{{{{(1 - {b_0})}^3}}}.
\end{align}
We see that
$$\frac{1}{2}\sum\limits_{k + l - 1 = m} {\frac{{\left| n \right|{{(k - l)}^2}}}{{{{\left( {2\left| n \right| + 1} \right)}^2}}}{h_{l,n}}{h_{k,n}}}  \le \frac{1}{{4(2\left| n \right| + 1)}}{(m + 1)^3},m = 3,4, \ldots $$
It follows that 
\begin{align}\label{12}
\nonumber
{I_n}(b) \le \frac{1}{{4(2\left| n \right| + 1)}}\sum\limits_{m = 3}^\infty  {{{(m + 1)}^3}} {b^m} &\le \frac{1}{{4(2\left| n \right| + 1)}}\sum\limits_{m = 0}^\infty  {{{(m + 1)}^3}} {b^m}\\
& \le \frac{{{b_0}}}{{4\left( {2\left| n \right| + 1} \right)}}\left( {\frac{{2{b_0} + 1}}{{{{(1 - {b_0})}^3}}} + \frac{{3{b_0}({b_0} + 1)}}{{{{(1 - {b_0})}^4}}}} \right).
\end{align}
From \eqref{13}, \eqref{9},\eqref{11} and \eqref{12} 
we deduce that
\begin{equation}\label{14}
\frac{1}{{2\left| n \right| + 1}} \le {M_n}(b) \le \frac{A}{{2\left| n \right| + 1}},\hspace{0.5cm}A\mbox{ is a constant depending on }a,  \varepsilon_0, N.
\end{equation}
On the other hand we have
\begin{equation}\label{15}
1 \le {N_n}(b) = {\left( {1 + \frac{{\left| n \right|}}{{2\left| n \right| + 1}}b + \sum\limits_{m = 2}^\infty  {\frac{{\left| n \right|}}{{2\left| n \right| + 1}}{b^m}{h_{m,n}}} } \right)^2} \le \frac{1}{{1 - b}} \le \frac{1}{{1 - {b_0}}}.
\end{equation}
From \eqref{14} and \eqref{15} we have 
$$\frac{{1 - {b_0}}}{{2\left| n \right| + 1}} \le {B'_n}(b) \le \frac{A}{{2\left| n \right| + 1}},$$
where $A$ is a constant depending on $a, \varepsilon_0, N.$
\end{proof}

We now give an explicit formula to reconstruct the parameters $a$ and $\alpha$ from the Dirichlet-to-Neumann map. We define 
$${C_n}{ = \frac{\Lambda _\alpha ({e^{in\theta }})}{{\left| n \right|{e^{in\theta }}}} = \alpha_0}\frac{{1 - {a^{2\left| n \right|}} + (1 + {a^{2\left| n \right|}})\frac{{{\alpha _1}}}{{{\alpha _0}}}{B_n}(b)}}{1 + {a^{2\left| n \right|}} + (1 - {a^{2\left| n \right|}})\frac{{{\alpha _1}}}{{{\alpha _0}}}{B_n}(b)}.$$
If there is a strictly increasing sequence of positive integers $\{n_k\}_{k=1}^\infty$ such that $C_{n_k}=\alpha_0$, it is easy to obtain $\alpha_0=\alpha_1, \alpha_2=0,$ i.e. the conductor is homogeneous. Otherwise we have the following proposition. 
\begin{md} The following formulas hold
\begin{enumerate}
\item[(i)] $\alpha_0=\mathop {\lim }\limits_{n \to \infty } {C_n}.$
\item[(ii)] $a^{-2}=\mathop {\lim }\limits_{n \to \infty } \frac{{{C_n} - {\alpha _0}}}{{{C_{n + 1}} - {\alpha _0}}}.$
\item[(iii)] ${\alpha _1} = \alpha _0D,$ where
$$D = \frac{{\mathop {\lim }\limits_{n \to \infty } \frac{{{C_n} - {\alpha _0}}}{{2{a^{2\left| n \right|}}{\alpha _0}}} + 1}}{{1 - \mathop {\lim }\limits_{n \to \infty } \frac{{{C_n} - {\alpha _0}}}{{2{a^{2\left| n \right|}}{\alpha _0}}}}}.$$
\item[(iv)] $a\alpha_2=\alpha_1E,$ where
$$ E = \mathop {\lim }\limits_{n \to \infty } (2\left| n \right| + 1)\left[ {\frac{{{\alpha _0}\left( {2{\alpha _0}{a^{2\left| n \right|}} + ({C_n} - {\alpha _0})(1 + {a^{2\left| n \right|}})} \right)}}{{{\alpha _1}\left( {2{\alpha _0}{a^{2\left| n \right|}} - ({C_n} - {\alpha _0})(1 - {a^{2\left| n \right|}})} \right)}} - 1} \right]. $$
\end{enumerate}
\end{md}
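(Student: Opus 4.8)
The plan is to reduce all four formulas to a single master identity for $C_n-\alpha_0$. Throughout I write $t=a^{2|n|}$ and $P_n=\frac{\alpha_1}{\alpha_0}B_n(b)$; since $0\le b\le b_0<1$ and $0<a<1$, Proposition \ref{5}(ii) gives $P_n\to\frac{\alpha_1}{\alpha_0}$ and $t\to 0$. Subtracting $\alpha_0$ from the defining expression for $C_n$ and simplifying the numerator (the $t^0$-terms cancel and the coefficient of $P_n$ collapses to $2t$) yields
\[
C_n-\alpha_0=\frac{2\alpha_0 t\,(P_n-1)}{1+t+(1-t)P_n}.
\]
Note also that the ``otherwise'' hypothesis forces $C_n\ne\alpha_0$ for all large $n$, so every ratio below is eventually well defined.

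Formula (i) is then immediate: letting $n\to\infty$ in the identity forces $C_n-\alpha_0\to 0$. For (ii) I would factor $\frac{C_n-\alpha_0}{C_{n+1}-\alpha_0}$ into three pieces, using $|n+1|=|n|+1$: the power ratio $a^{2|n|}/a^{2|n+1|}=a^{-2}$, the ratio of the denominators $1+t+(1-t)P_n$ (which tends to $1$ since both converge to $1+\frac{\alpha_1}{\alpha_0}$), and the ratio $\frac{P_n-1}{P_{n+1}-1}$, which tends to $1$ by Proposition \ref{5}(iv) when $b\ne 0$ and equals $1$ identically when $b=0$ (then $B_n\equiv 1$). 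Multiplying the three limits gives $a^{-2}$. For (iii) I divide the master identity by $2a^{2|n|}\alpha_0$ to obtain $\frac{C_n-\alpha_0}{2a^{2|n|}\alpha_0}=\frac{P_n-1}{1+t+(1-t)P_n}$, whose limit is $L=\frac{\alpha_1-\alpha_0}{\alpha_0+\alpha_1}$; a short computation shows $L+1=\frac{2\alpha_1}{\alpha_0+\alpha_1}$ and $1-L=\frac{2\alpha_0}{\alpha_0+\alpha_1}$, so $\frac{L+1}{1-L}=\frac{\alpha_1}{\alpha_0}$, giving $\alpha_1=\alpha_0 D$.

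The real work is (iv), where the bracketed fraction looks intractable but simplifies exactly. The key step is to substitute the master identity for $C_n-\alpha_0$ into both the numerator and the denominator of the large fraction. Writing $Q_n=1+t+(1-t)P_n$, direct computation gives $Q_n+(P_n-1)(1+t)=2P_n$ and $Q_n-(P_n-1)(1-t)=2$, so that
\[
\alpha_0\bigl(2\alpha_0 t+(C_n-\alpha_0)(1+t)\bigr)=\frac{4\alpha_0^2 t\,P_n}{Q_n},\qquad \alpha_1\bigl(2\alpha_0 t-(C_n-\alpha_0)(1-t)\bigr)=\frac{4\alpha_0\alpha_1 t}{Q_n}.
\]
Taking the quotient collapses everything to $\frac{\alpha_0 P_n}{\alpha_1}=B_n(b)$, so the bracket is exactly $B_n(b)-1$ and
\[
E=\lim_{n\to\infty}(2|n|+1)\bigl(B_n(b)-1\bigr)=\frac{b}{1-b}
\]
by Proposition \ref{5}(iii). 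Since $b=\frac{a\alpha_2}{\alpha_1+a\alpha_2}$ rearranges to $\frac{b}{1-b}=\frac{a\alpha_2}{\alpha_1}$, this is precisely $a\alpha_2=\alpha_1 E$. I expect the clean cancellation in the two displayed identities above to be the crux: once one sees that the numerator and denominator of the fraction differ only by the factor $\alpha_0 P_n$ versus $\alpha_1$, the whole expression unmasks as $B_n(b)$ and the previously established asymptotics of $B_n$ finish the proof.
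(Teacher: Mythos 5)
Your proposal is correct and takes essentially the same route as the paper: both arguments rest on the identity $C_n-\alpha_0=\dfrac{2\alpha_0 a^{2|n|}\left(\frac{\alpha_1}{\alpha_0}B_n(b)-1\right)}{1+a^{2|n|}+(1-a^{2|n|})\frac{\alpha_1}{\alpha_0}B_n(b)}$ together with parts (ii)--(iv) of Proposition \ref{5}, and your part (iv) just verifies in the forward direction the same algebra the paper performs when it solves this identity for $B_n(b)$. The only (minor, welcome) difference is that you explicitly handle the degenerate case $b=0$ in (ii), which the paper leaves implicit.
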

\begin{proof}
(i) From (ii) in Proposition \ref{5}
$$\mathop {\lim }\limits_{n \to \infty } {C_n} = {\alpha_0}. $$
(ii) Next we have
$$\mathop {\lim }\limits_{n \to \infty } \frac{{{C_n} - {\alpha _0}}}{{{C_{n + 1}} - {\alpha _0}}} = \mathop {\lim }\limits_{n \to \infty }\frac{{2{a^{2\left| n \right|}}\left( {\frac{{{\alpha _1}}}{{{\alpha _0}}}{B_n}(b) - 1} \right)}}{{2{a^{2\left| n \right| + 2}}\left( {\frac{{{\alpha _1}}}{{{\alpha _0}}}{B_{n + 1}}(b) - 1} \right)}}\frac{{1 + {a^{2\left| n \right| + 2}} + (1 - {a^{2\left| n \right| + 2}})\frac{{{\alpha _1}}}{{{\alpha _0}}}{B_{n + 1}}(b)}}{{1 + {a^{2\left| n \right|}} + (1 - {a^{2\left| n \right|}})\frac{{{\alpha _1}}}{{{\alpha _0}}}{B_n}(b)}}.$$
Using (ii) and (iv) in Proposition \ref{5} we obtain
 $$\mathop {\lim }\limits_{n \to \infty } \frac{{{C_n} - {\alpha _0}}}{{{C_{n + 1}} - {\alpha _0}}} = \frac{1}{{{a^2}}}.$$
(iii) Using (ii) in Proposition \ref{5} we have
\begin{align*}
\mathop {\lim }\limits_{n \to \infty } \frac{{{C_n} - {\alpha _0}}}{{2{a^{2\left| n \right|}}{\alpha _0}}} & = \mathop {\lim }\limits_{n \to \infty }\frac{ {\frac{{{\alpha _1}}}{{{\alpha _0}}}{B_n}(b) - 1} }{{1 + {a^{2\left| n \right|}} + (1 - {a^{2\left| n \right|}})\frac{{{\alpha _1}}}{{{\alpha _0}}}{B_n}(b)}}\\
&=\frac{{\frac{{{\alpha _1}}}{{{\alpha _0}}} - 1}}{{\frac{{{\alpha _1}}}{{{\alpha _0}}} + 1}}.
\end{align*}
This leads to 
$${\alpha _1} = \dfrac{{{\alpha _0}\left( {\mathop {\lim }\limits_{n \to \infty } \dfrac{{{C_n} - {\alpha _0}}}{{2{a^{2\left| n \right|}}{\alpha _0}}} + 1} \right)}}{{1 - \mathop {\lim }\limits_{n \to \infty } \dfrac{{{C_n} - {\alpha _0}}}{{2{a^{2\left| n \right|}}{\alpha _0}}}}}.$$
(iv)  We now calculate $\alpha_2$. 
From 
$${C_n} - {\alpha _0} = {\alpha _0}\frac{{2{a^{2\left| n \right|}}\left( {\frac{{{\alpha _1}}}{{{\alpha _0}}}{B_n}(b) - 1} \right)}}{{1 + {a^2{\left| n \right|}} + (1 - {a^2{\left| n \right|}})\frac{{{\alpha _1}}}{{{\alpha _0}}}{B_n}(b)}}$$
we calculate
$${B_n}(b) = \frac{{{\alpha _0}\left( {2{\alpha _0}{a^{2\left| n \right|}} + ({C_n} - {\alpha _0})(1 + {a^{2\left| n \right|}})} \right)}}{{{\alpha _1}\left( {2{\alpha _0}{a^{2\left| n \right|}} - ({C_n} - {\alpha _0})(1 - {a^{2\left| n \right|}})} \right)}}.$$
From that and (iii) in Proposition \ref{5} we get 
\begin{align*}
E &= \mathop {\lim }\limits_{n \to \infty } (2\left| n \right| + 1)\left[ {\frac{{{\alpha _0}\left( {2{\alpha _0}{a^{2\left| n \right|}} + ({C_n} - {\alpha _0})(1 + {a^{2\left| n \right|}})} \right)}}{{{\alpha _1}\left( {2{\alpha _0}{a^{2\left| n \right|}} - ({C_n} - {\alpha _0})(1 - {a^{2\left| n \right|}})} \right)}} - 1} \right] \\
&= \mathop {\lim }\limits_{n \to \infty } (2\left| n \right| + 1)({B_n}(b) - 1) = \frac{b}{{1 - b}}.
\end{align*}
From $b={\dfrac{{a{\alpha _2}}}{{{\alpha _1} + a{\alpha _2}}}}$ we obtain $a\alpha_2=\alpha_1 E.$
\end{proof}

We now prove Theorem \ref{7}.
\begin{proof}
For $ \gamma_\alpha, \gamma_\beta \in \mu(a,\varepsilon_0,M,N), f\in H^{\frac{1}{2}}(\partial B)$ we have
$$\left\| {\left( \Lambda  _\alpha  - \Lambda _\beta  \right)f} \right\|_{{H^{ - \frac{1}{2}}}(\partial B)}^2 = \sum\limits_{n \in \mathbb{Z}} {\frac{{{n^2}}}{{{{(1 + {n^2})}^{\frac{1}{2}}}}}{{({A_n} - {B_n})}^2}{{\left| {\widehat f(n)} \right|}^2}} ,$$
where $b = a{\alpha _2}/(\alpha _1 + a{\alpha _2}), c = a{\beta _2}/(\beta _1 + a{\beta _2}),$ and
$${A_n} = \alpha_0\frac{{1 - {a^{2\left| n \right|}} + (1 + {a^{2\left| n \right|}})\frac{{{\alpha _1}}}{{{\alpha _0}}}{B_n}(b)}}{{1 + {a^{2\left| n \right|}} + (1 - {a^{2\left| n \right|}})\frac{{{\alpha _1}}}{{{\alpha _0}}}{B_n}(b)}},\hspace{0.5cm}{B_n} = \beta_0\frac{{1 - {a^{2\left| n \right|}} + (1 + {a^{2\left| n \right|}})\frac{{{\beta _1}}}{{{\beta _0}}}{B_n}(c)}}{{1 + {a^{2\left| n \right|}} + (1 - {a^{2\left| n \right|}})\frac{{{\beta _1}}}{{{\beta _0}}}{B_n}(c)}}.$$
By direct computation, we obtain
\begin{align*}
& {A_n} -{B_n} = \frac{{({\alpha _0} + {\beta _0})\left( {\frac{{{\alpha _1}}}{{{\alpha _0}}}{B_n}(b) - \frac{{{\beta _1}}}{{{\beta _0}}}{B_n}(c)} \right)2{a^{2\left| n \right|}}}}{{\left( {1 + {a^{2\left| n \right|}} + (1 - {a^{2\left| n \right|}})\frac{{{\alpha _1}}}{{{\alpha _0}}}{B_n}(b)} \right)\left( {1 + {a^{2\left| n \right|}} + (1 - {a^{2\left| n \right|}})\frac{{{\beta _1}}}{{{\beta _0}}}{B_n}(c)} \right)}}\\
\nonumber
&+\frac{{({\alpha _0} - {\beta _0})\left[ {(1 - {a^{4\left| n \right|}})\left( {1 + \frac{{{\alpha _1}}}{{{\alpha _0}}}\frac{{{\beta _1}}}{{{\beta _0}}}{B_n}(b){B_n}(c)} \right) + (1 + {a^{4\left| n \right|}})\left( {\frac{{{\alpha _1}}}{{{\alpha _0}}}{B_n}(b) + \frac{{{\beta _1}}}{{{\beta _0}}}{B_n}(c)} \right)} \right]}}{{\left( {1 + {a^{2\left| n \right|}} + (1 - {a^{2\left| n \right|}})\frac{{{\alpha _1}}}{{{\alpha _0}}}{B_n}(b)} \right)\left( {1 + {a^{2\left| n \right|}} + (1 - {a^{2\left| n \right|}})\frac{{{\beta _1}}}{{{\beta _0}}}{B_n}(c)} \right)}}.
\end{align*}
We denote by $K_n$ and $H_n$ the numerator and denominator of $A_n-B_n$, respectively. We have ${H_n} \le {\left( {2 + \frac{M}{{{\varepsilon _0}}}{d_0}} \right)^2}$ and 
 $${\left\| \Lambda _\alpha  - \Lambda _\beta  \right\|_{\star}} = \mathop {\sup }\limits_{{f \in {H^{\frac{1}{2}}}(\partial B)}\atop f \ne 0} \frac{{{{\left\| {\left( {{\Lambda _{{\alpha }}} - {\Lambda _{{\beta }}}} \right)f} \right\|}_{{H^{ - \frac{1}{2}}}(\partial B)}}}}{{{{\left\| f \right\|}_{{H^{\frac{1}{2}}}(\partial B)}}}} \ge \mathop {\sup }\limits_{n\not=0} \frac{{\left| {{K_n}} \right|}}{{\left| {{2H_n}} \right|}} \ge \mathop {\sup }\limits_{n\not=0} \frac{{\left| {{K_n}} \right|}}{{{2\left( {2 + \frac{M}{{{\varepsilon _0}}}{d_0}} \right)^2}}},$$
$$\left| {{K_n}} \right| \ge \frac{{2{\varepsilon _0}}}{M}\left| {{\alpha _0} - {\beta _0}} \right| - \frac{{8{M^2}{d_0}}}{{{\varepsilon _0}}}{a^{2\left| n \right|}}.$$
When $\alpha_0 \ne \beta_0$, for $n$ big enough, we obtain 
$$\frac{{8{M^2}{d_0}}}{{{\varepsilon _0}}}{a^{2\left| n \right|}} \le \frac{{{\varepsilon _0}}}{M}\left| {{\alpha _0} - {\beta _0}} \right|.$$
Hence
\begin{equation}\label{h}
{\left\| \Lambda _\alpha  - \Lambda  _\beta  \right\|_ \star } \ge \frac{{{\varepsilon _0}}}{2M}{\left( {2 + \frac{M}{{{\varepsilon _0}}}{d_0}} \right)^{ - 2}}\left| {{\alpha _0} - {\beta _0}} \right|.
\end{equation}
For $\alpha_0 =\beta_0$ we also have \eqref{h}.\\

\noindent Next, we have
$$\left| {{K_n}} \right| \ge 4{\varepsilon _0}{a^{2\left| n \right|}}\left| {\frac{{{\alpha _1}}}{{{\alpha _0}}}{B_n}(b) - \frac{{{\beta _1}}}{{{\beta _0}}}{B_n}(c)} \right| - \left| {{\alpha _0} - {\beta _0}} \right|\left| {1 + \frac{{{M^2}}}{{\varepsilon _0^2}}{d_0} + 4\frac{M}{{{\varepsilon _0}}}} \right|.$$
From  \eqref{h} we have
\begin{equation}\label{19}
{\left\| \Lambda  _\alpha - \Lambda_\beta \right\|_{\star}} \ge {C_1}4{\varepsilon _0}{a^{2\left| n \right|}}\left| {\frac{{{\alpha _1}}}{{{\alpha _0}}}{B_n}(b) - \frac{{{\beta _1}}}{{{\beta _0}}}{B_n}(c)} \right|,
\end{equation}
where $C_1=C_1(a,\varepsilon_0,M)$ is a constant. We now consider
\begin{align*}
\frac{{{\alpha _1}}}{{{\alpha _0}}}{B_n}(b) - \frac{{{\beta _1}}}{{{\beta _0}}}{B_n}(c) =& \frac{{{\alpha _1}}}{{{\alpha _0}}}\left( {{B_n}(b) - {B_n}(c)} \right) + \left( {\frac{{{\alpha _1}}}{{{\alpha _0}}} - \frac{{{\beta _1}}}{{{\beta _0}}}} \right){B_n}(c)\\
=\frac{{{\alpha _1}}}{{{\alpha _0}}}(b - c){B'_n}(\xi ) & +\frac{{{\beta _0}({\alpha _1} - {\beta _1}) + {\beta _1}({\beta _0} - {\alpha _0})}}{{{\alpha _0}{\beta _0}}}{B_n}(c) \hspace{0.5cm}(\text{for } \xi \in (b,c))  \\
= - \frac{{{\beta _1}{B_n}(c)}}{{{\alpha _0}{\beta _0}}}({\alpha _0} - {\beta _0})& + \left[ {\frac{{{B_n}(c)}}{{{\alpha _0}}} - \frac{{{\alpha _1}{\beta _2}a{B'_n}(\xi )}}{{{\alpha _0}({\alpha _1} + {\alpha _2}a)({\beta _1} + {\beta _2}a)}}} \right]({\alpha _1} - {\beta _1})\\
&+\frac{{{\alpha _1}{\beta _1}a{B'_n}(\xi )}}{{{\alpha _0}({\alpha _1} + {\alpha _2}a)({\beta _1} + {\beta _2}a)}}({\alpha _2} - {\beta _2}).
\end{align*}
So from  \eqref{h} and \eqref{19} we have
\begin{equation}\label{23}
 {\left\| \Lambda  _\alpha - \Lambda _\beta \right\|_{\star}} \ge {C_2}{a^{2\left| n \right|}}{D_n},
\end{equation}
where $C_2=C_2(a,\varepsilon_0,M)$ and
$${D_n} = \frac{1}{\alpha_0}\left| \left[ B_n(c) - \frac{{{\alpha _1}{\beta _2}a{B'_n}(\xi )}}{{({\alpha _1} + {\alpha _2}a)({\beta _1} + {\beta _2}a)}} \right]({\alpha _1} - {\beta _1}) + \frac{{{\alpha _1}{\beta _1}a{B'_n}(\xi )}}{{({\alpha _1} + {\alpha _2}a)({\beta _1} + {\beta _2}a)}}({\alpha _2} - {\beta _2}) \right|.$$
Using (i) and (v) in Proposition \ref{5} we get
\begin{equation}\label{16}
\frac{1}{M}\left( {1 - \frac{A}{{2\left| n \right| + 1}}} \right) \le \frac{{{B_n}(c)}}{{{\alpha _0}}} - \frac{{{\alpha _1}{\beta _2}a{B'_n}(\xi )}}{{{\alpha _0}({\alpha _1} + {\alpha _2}a)({\beta _1} + {\beta _2}a)}} \le \frac{{{d_0}}}{{{\varepsilon _0}}},
\end{equation}
\begin{equation}\label{17}
0\le \frac{{\varepsilon _0^2a(1 - {b_0})}}{{M{{({\varepsilon _0} + Na)}^2}\left( {2\left| n \right| + 1} \right)}} \le \frac{{{\alpha _1}{\beta _1}a{B'_n}(\xi )}}{{{\alpha _0}({\alpha _1} + {\alpha _2}a)({\beta _1} + {\beta _2}a)}} \le \frac{A}{{{\varepsilon _0}\left( {2\left| n \right| + 1} \right)}}.
\end{equation}
There exists an $n_0=n_0(a, \varepsilon_0, N)$ such that for every $n\ge n_0$ then
$$0\le \frac{1}{{2M}} \le \frac{1}{M}\left( {1 - \frac{A}{{2\left| n \right| + 1}}} \right).$$
We now show that
\begin{equation}\label{20}
{\left\| \Lambda  _\alpha - \Lambda _\beta \right\|_ \star } \ge C(a,{\varepsilon _0},M,N){\rm{ }}\left( {\left| {{\alpha _1} - {\beta _1}} \right| + \left| {{\alpha _2} - {\beta _2}} \right|} \right).
\end{equation}
We consider three cases.\\
$\star$ \underline{\textbf{Case 1}}: $({\alpha _1} - {\beta _1})({\alpha _2} - {\beta _2}) \ge 0.$\\
We have 
\begin{align}\label{18}
\nonumber
{D_{{n_0}}}& \ge \frac{1}{{2M}}\left| {{\alpha _1} - {\beta _1}} \right| + \frac{{\varepsilon _0^2a(1 - {b_0})}}{{M{{({\varepsilon _0} + Na)}^2}\left( {2\left| {{n_0}} \right| + 1} \right)}}\left| {{\alpha _2} - {\beta _2}} \right|\\
&\ge \min \left\{ {\frac{1}{{2M}},\frac{{\varepsilon _0^2a(1 - {b_0})}}{{M{{({\varepsilon _0} + Na)}^2}\left( {2\left| {{n_0}} \right| + 1} \right)}}} \right\}\left( {\left| {{\alpha _1} - {\beta _1}} \right| + \left| {{\alpha _2} - {\beta _2}} \right|} \right).
\end{align}
From \eqref{23} and \eqref{18} we obtain \eqref{20}.\\
$\star$ \underline{\textbf{Case 2}}: ${\left( {{\alpha _1} - \beta_1 } \right)}\left( {{\alpha _2} - {\beta _2}} \right) < 0$ and
$${D_{{n_0}}} = \left[ {\frac{{{B_{{n_0}}}(c)}}{{{\alpha _0}}} - \frac{{{\alpha _1}{\beta _2}a{B'_n}_{_0}(\xi )}}{{{\alpha _0}({\alpha _1} + {\alpha _2}a)({\beta _1} + {\beta _2}a)}}} \right]\left| {{\alpha _1} - {\beta _1}} \right| - \frac{{{\alpha _1}{\beta _1}a{B'_n}_{_0}(\xi )}}{{{\alpha _0}({\alpha _1} + {\alpha _2}a)({\beta _1} + {\beta _2}a)}}\left| {{\alpha _2} - {\beta _2}} \right|.$$
From that we have
$$\frac{d_0}{{{\varepsilon _0}}}\left| {{\alpha _1} - {\beta _1}} \right| - \frac{{\varepsilon _0^2a(1 - {b_0})}}{{M{{({\varepsilon _0} + Na)}^2}\left( {2\left| {{n_0}} \right| + 1} \right)}}\left| {{\alpha _2} - {\beta _2}} \right| \ge {D_{{n_0}}} \ge 0.$$
Then there exists an $n_1=n_1(a, \varepsilon_0, M,N)> n_0$ such that
$$\frac{{\left| {{\alpha _1} - {\beta _1}} \right|}}{{4M}} \ge \frac{{d_0AM{{({\varepsilon _0} + Na)}^2}}}{{\varepsilon _0^2a(1 - {b_0})}}\frac{{\left( {2\left| {{n_0}} \right| + 1} \right)}}{{\left( {2\left| {{n_1}} \right| + 1} \right)}}\left| {{\alpha _1} - {\beta _1}} \right| \ge \frac{A}{{{\varepsilon _0}\left( {2\left| {{n_1}} \right| + 1} \right)}}\left| {{\alpha _2} - {\beta _2}} \right|.$$
We get
\begin{align}\label{21}
\nonumber
{D_{{n_1}}}& \ge \frac{{\left| {{\alpha _1} - {\beta _1}} \right|}}{{2M}} - \frac{A}{{{\varepsilon _0}\left( {2\left| {{n_1}} \right| + 1} \right)}}\left| {{\alpha _2} - {\beta _2}} \right|\\
& \ge \frac{{\left| {{\alpha _1} - {\beta _1}} \right|}}{{4M}}>\frac{A}{{{\varepsilon _0}\left( {2\left| {{n_1}} \right| + 1} \right)}}\left| {{\alpha _2} - {\beta _2}} \right|.
\end{align}
From \eqref{23} and \eqref{21} we have \eqref{20}\\
$\star$ \underline{\textbf{Case 3}}: ${\left( {{\alpha _1} - \beta_1 } \right)}\left( {{\alpha _2} - {\beta _2}} \right) < 0$ and
$${D_{{n_0}}} = \frac{{{\alpha _1}{\beta _1}a{B'_{{n_0}}}(\xi )}}{{{\alpha _0}({\alpha _1} + {\alpha _2}a)({\beta _1} + {\beta _2}a)}}\left| {{\alpha _2} - {\beta _2}} \right| - \left[ {\frac{{{B_{{n_0}}}(c)}}{{{\alpha _0}}} - \frac{{{\alpha _1}{\beta _2}a{B'_{{n_0}}}(\xi )}}{{{\alpha _0}({\alpha _1} + {\alpha _2}a)({\beta _1} + {\beta _2}a)}}} \right]\left| {{\alpha _1} - {\beta _1}} \right|.$$
There exists an $n_2=n_2(a, \varepsilon_0, M,N)>n_0$ such that 
\begin{equation}\label{25}
\frac{{\varepsilon _0^2a(1 - {b_0})}}{{2M{{({\varepsilon _0} + Na)}^2}\left( {2\left| {{n_0}} \right| + 1} \right)}}\left| {{\alpha _2} - {\beta _2}} \right| \ge \frac{{2d_0MA}}{{\varepsilon _0^2(2\left| {{n_2}} \right| + 1)}}\left| {{\alpha _2} - {\beta _2}} \right|.
\end{equation}
If
$${D_{{n_2}}} = \left[ {\frac{{{B_{{n_2}}}(c)}}{{{\alpha _0}}} - \frac{{{\alpha _1}{\beta _2}aB'_{n_2}(\xi )}}{{{\alpha _0}({\alpha _1} + {\alpha _2}a)({\beta _1} + {\beta _2}a)}}} \right]\left| {{\alpha _1} - {\beta _1}} \right| - \frac{{{\alpha _1}{\beta _1}B'_{n_2}(\xi )}}{{{\alpha _0}({\alpha _1} + {\alpha _2}a)({\beta _1} + {\beta _2}a)}}\left| {{\alpha _2} - {\beta _2}} \right|,$$
we return to \textbf{Case 2}. Otherwise,
\begin{equation}\label{26}
\frac{A}{{{\varepsilon _0}\left( {2\left| {{n_2}} \right| + 1} \right)}}\left| {{\alpha _2} - {\beta _2}} \right| - \frac{1}{{2M}}\left| {{\alpha _1} - {\beta _1}} \right| \ge {D_{{n_2}}} \ge 0.
\end{equation}
From \eqref{25} and \eqref{26} we obtain
$$\frac{{\varepsilon _0^2a(1 - {b_0})}}{{2M{{({\varepsilon _0} + Na)}^2}\left( {2\left| {{n_0}} \right| + 1} \right)}}\left| {{\alpha _2} - {\beta _2}} \right| \ge \frac{d_0}{{{\varepsilon _0}}}\left| {{\alpha _1} - {\beta _1}} \right|.$$
Moreover, we have 
\begin{align}\label{27}
{D_{{n_0}}}& \ge \frac{{\varepsilon _0^2a(1 - {b_0})}}{{M{{({\varepsilon _0} + Na)}^2}\left( {2\left| {{n_0}} \right| + 1} \right)}}\left| {{\alpha _2} - {\beta _2}} \right| - \frac{d_0}{{{\varepsilon _0}}}\left| {{\alpha _1} - {\beta _1}} \right|\notag\\
& \ge \frac{{\varepsilon _0^2a(1 - {b_0})}}{{2M{{({\varepsilon _0} + Na)}^2}\left( {2\left| {{n_0}} \right| + 1} \right)}}\left| {{\alpha _2} - {\beta _2}} \right|\ge \frac{d_0}{{{\varepsilon _0}}}\left| {{\alpha _1} - {\beta _1}} \right|.
\end{align}
From \eqref{23} and \eqref{27} we have \eqref{20}.
From \eqref{h} and \eqref{20} the conclusion follows.
\end{proof}

\section{Proof of Theorem \ref{47}}
Consider the Dirichlet problem
\begin{align}\label{1}
\begin{cases}
\nabla  \cdot (\gamma_\alpha \nabla u) = 0 & \mbox{ in }B\times (0,+\infty), \\
\hspace{1.55cm}u = 0 &\mbox{ on }\partial B \times (0,+\infty),\\
\hspace{1.55cm}u = f &\mbox{ on } B\times \{0\},
\end{cases}
\end{align}
where the conductivity $\gamma_\alpha\in\mu(h, M).$\\
\begin{dn}
(i) We denote $$L_{rad}^2(B) = \left\{ u \in {L^2}(B),u(x, y)=f\left(\sqrt{x^2+y^2}\right) \right\}.$$
$$L_{rad}^2(B\times (0,+\infty)) = \left\{ {u \in {L^2}(B\times (0,+\infty)),} u(x, y, z)=f\left(\sqrt{x^2+y^2}, z\right) \right\}.$$
(ii) Let
$$H_{rad}^{\frac{1}{2}}(B) = \left\{ {f \in L_{rad}^2(B),\sum\limits_{n = 1}^\infty  {{{(1 + {{\left| {{\lambda _n}} \right|}^2})}^{\frac{1}{2}}}{{\left| {\widehat f(n)} \right|}^2}J_1^2(} {\lambda _n}) < \infty } \right\}, $$
where $$\widehat f(n) = \frac{{2\int\limits_0^1 {f(} r){J_0}({\lambda _n}r)rdr}}{{{{({J_1}({\lambda _n}))}^2}}},$$
$J_0(\lambda_n r)$ is Bessel function of order zero, $\lambda_n$ is positive zero of function $J_0$,$${\lambda _1} < {\lambda _2} <  \ldots {\lambda _n} \ldots ,{\lambda _n} \sim \left( {n - \frac{1}{4}} \right)\pi , \mbox{ when }n\to \infty.$$
${J_1}({\lambda _n})$ is Bessel function of order one and
$${J_1}({\lambda _n} ) = \sum\limits_{m = 0}^\infty  {\dfrac{{{{( - 1)}^m}{\lambda _n^{2m + 1}}}}{{{2^{2m + 1}}(m + 1)!m!}}},\hspace{0.5cm}{J_1}({\lambda_n})=-J_0'({\lambda_n})$$
with 
 ${J_1}({\lambda _n}) \sim \sqrt {\dfrac{2}{{\pi {\lambda _n}}}} \cos \left( {{\lambda _n} - \dfrac{{3\pi }}{4}} \right) + {\rm O}\left( {\dfrac{1}{{\lambda _n^{3/2}}}} \right)$ when $n\to\infty.$\\
The norm of $f \in H_{rad}^{\frac{1}{2}}(B)$ is given by
$${\left\| f \right\|_{H_{rad}^{\frac{1}{2}}(B)}} = {\left( {\sum\limits_{n = 1}^\infty  {{{(1 + {{\left| {{\lambda _n}} \right|}^2})}^{\frac{1}{2}}}{{\left| {\widehat f(n)} \right|}^2}J_1^2(} {\lambda _n})} \right)^{\frac{1}{2}}}.$$
(iii) The dual space of $H_{rad}^{\frac{1}{2}}(B) $ is defined by 
 $$H_{rad}^{ - \frac{1}{2}}(B)={\left( {H_{rad}^{\frac{1}{2}}(B)} \right)^*} = \left\{ {f:H_{rad}^{ \frac{1}{2}}(B) \to \mathbb{C}} \mbox{ bounded linear functional }\right\}$$
with norm
$${\left\| f \right\|_{H_{rad}^{ - \frac{1}{2}}(B)}} = {\left( {\sum\limits_{n = 1}^\infty  {{{(1 + {{\left| {{\lambda _n}} \right|}^2})}^{ - \frac{1}{2}}}{{\left| {\widehat f(n)} \right|}^2}J_1^2(} {\lambda _n})} \right)^{\frac{1}{2}}}.$$
(iv) We denote 
$$H_{rad}^1(B \times (0, + \infty )) = \left\{ {u \in L_{rad}^2(B \times (0, + \infty )):|\nabla u| \in L_{rad}^2(B \times (0, + \infty )} \right\}.$$
In the cylindrical coordinates, if $u(r, z)=\sum\limits_{n=1}^\infty u_n(z)J_0(\lambda_n r)$ we have
$${\left\| u \right\|_{{H^1_{rad}}(B \times (0, + \infty ))}} = \pi \sum\limits_{n = 1}^\infty  {J_1^2({\lambda _n})} \int\limits_0^\infty  {{\rm{[}}(1 + \lambda _n^2){{\left| {{u_n}(z)} \right|}^2} + {{\left| {{u'_n}(z)} \right|}^2}} {\rm{]}}dz.$$
\end{dn}
For $f \in {H_{rad}^{ \frac{1}{2}}(B)}$, the Dirichlet problem \eqref{1} in cylindrical coordinates is
\begin{align*}
\begin{cases}
\gamma _\alpha{u_{rr }} + \frac{\gamma _\alpha }{r }{u_r } + \partial _z(\gamma_\alpha{u_z}) = 0,&B \times (0,\infty ), \\
u(1,z)  = 0,& 0<z<\infty,\\
u(r,0)=f,& 0\le r<1,
\end{cases}
\end{align*}
have unique solution $u \in H^1_{rad} \left( {B \times (0,\infty )} \right)$.\\
We expand $u = \sum\limits_{n = 1}^\infty  {{u_n}(z){J_0}({\lambda _n}r )}$. By direct computation we have
\begin{align*}
{u_n(z)=}\begin{cases}
 {a_n}{e^{ - {\lambda _n}z}}&  \mbox{ if }h \le z < \infty,\\
{b_n}{e^{ - {\lambda _n}z}} + {c_n}{e^{{\lambda _n}z}}&\mbox{ if }0 \le z < h.
\end{cases}
\end{align*}
At $z=h$ we have 
\begin{align*}
\begin{cases}
\mathop {\lim }\limits_{z \to {h^ + }} {u_n}(z) = \mathop {\lim }\limits_{z \to {h^ - }} {u_n}(z),\\
\mathop {\lim }\limits_{z \to {h^ + }} \left( \gamma _\alpha {u'_n} \right)(z) = \mathop {\lim }\limits_{z \to {h^ - }} \left( \gamma _\alpha{u'_n} \right)(z).
\end{cases}
\end{align*}
It follows that
\begin{align*}
\frac{c_n}{b_n}=\frac{\alpha_2-\alpha_1}{(2+\alpha_1+\alpha_2)e^{2\lambda_nh}}.
\end{align*}
The Dirichlet-to-Neumann map $\Lambda _\alpha:{H_{rad}^{\frac{1}{2}}(B)} \to {H_{rad}^{ - \frac{1}{2}}(B)}$ is determined by
\begin{equation*}
\Lambda_\alpha f(r) =  - \sum\limits_{n = 1}^\infty  {(1 + {\alpha _2})\frac{{({\alpha _2} - {\alpha _1}){e^{ - 2{\lambda _n}h}} - \left( {2 + {\alpha _1} + {\alpha _2}} \right)}}{{({\alpha _2} - {\alpha _1}){e^{ - 2{\lambda _n}h}} + 2 + {\alpha _1} + {\alpha _2}}}} {\lambda _n}\hat f(n){J_0}({\lambda _n}r ).
\end{equation*}

We now give an explicit formula to reconstruct the parameters $h$, $\alpha$ from the Dirichlet-to-Nemann map. Define
\begin{equation}\label{115}
{A_n} = -\frac{{{\Lambda _{\alpha} }({J_0}({\lambda _n}r))}}{{{\lambda _n}{J_0}({\lambda _n}r)}} = (1 + {\alpha _2})\frac{{2 + {\alpha _1} + {\alpha _2} - ({\alpha _2} - {\alpha _1}){e^{ - 2{\lambda _n}h}}}}{{2 + {\alpha _1} + {\alpha _2} + ({\alpha _2} - {\alpha _1}){e^{ - 2{\lambda _n}h}}}}.
\end{equation}
If $A_1=1+\alpha_2$ then $\alpha_1=\alpha_2,$ i.e. the conductor is homogeneous. Otherwise $A_n\not=1+\alpha_2\; \forall n\in\mathbb N$ and we have the following proposition.
\begin{md} We recontruct $h, \alpha_j$ as follows
\begin{enumerate}
\item[(i)] ${\alpha _2} = \mathop {\lim }\limits_{n \to \infty } {A_n} - 1,$
\item[(ii)] $h = \dfrac{1}{2\pi }\ln \left( {\mathop {\lim }\limits_{n \to \infty } \dfrac{{{A_n} - 1-\alpha_2}}{{{A_{n + 1}} - 1-\alpha_2}}} \right).$
\item[(iii)] ${\alpha _1} = \dfrac{{2A + (A + 2){\alpha _2}}}{{2 - A}},$ where
$$A=\mathop {\lim }\limits_{n \to \infty } \dfrac{{\left( {{A_n} - 1-\alpha_2} \right){e^{2{\lambda _n}h}}}}{{1 + {\alpha _2}}}.$$
\end{enumerate}
\end{md}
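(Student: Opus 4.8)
The plan is to reduce the whole proposition to one algebraic identity and two elementary facts about the zeros of $J_0$. Subtracting $1+\alpha_2$ from the explicit expression \eqref{115} and writing $t_n:=e^{-2\lambda_n h}$, a direct computation over a common denominator gives
\begin{equation*}
A_n-(1+\alpha_2)=\frac{2(1+\alpha_2)(\alpha_1-\alpha_2)\,t_n}{2+\alpha_1+\alpha_2+(\alpha_2-\alpha_1)t_n}.
\end{equation*}
Everything else follows from this identity once I record that, since $h>0$ and $\lambda_n\to\infty$, we have $t_n\to0$, and that from the asymptotics $\lambda_n\sim\left(n-\frac{1}{4}\right)\pi$ stated in the definition we obtain $\lambda_{n+1}-\lambda_n\to\pi$.

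First I would prove (i) and (ii). Letting $n\to\infty$ in the displayed identity, the numerator tends to $0$ while the denominator tends to $2+\alpha_1+\alpha_2>0$, so $A_n\to1+\alpha_2$, which is precisely $\alpha_2=\lim_n A_n-1$. For (ii) I would form the ratio, in which the constant $2(1+\alpha_2)(\alpha_1-\alpha_2)$ cancels:
\begin{equation*}
\frac{A_n-1-\alpha_2}{A_{n+1}-1-\alpha_2}=\frac{t_n}{t_{n+1}}\cdot\frac{2+\alpha_1+\alpha_2+(\alpha_2-\alpha_1)t_{n+1}}{2+\alpha_1+\alpha_2+(\alpha_2-\alpha_1)t_n}.
\end{equation*}
The second factor tends to $1$ as $t_n,t_{n+1}\to0$, while $t_n/t_{n+1}=e^{2(\lambda_{n+1}-\lambda_n)h}\to e^{2\pi h}$; taking logarithms yields the formula for $h$ in (ii).

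For (iii) I would multiply the central identity by $e^{2\lambda_n h}=t_n^{-1}$ and divide by $1+\alpha_2$, so that $t_n$ cancels from the numerator:
\begin{equation*}
\frac{(A_n-1-\alpha_2)e^{2\lambda_n h}}{1+\alpha_2}=\frac{2(\alpha_1-\alpha_2)}{2+\alpha_1+\alpha_2+(\alpha_2-\alpha_1)t_n}\longrightarrow\frac{2(\alpha_1-\alpha_2)}{2+\alpha_1+\alpha_2}=:A.
\end{equation*}
Solving the resulting linear equation $A(2+\alpha_1+\alpha_2)=2(\alpha_1-\alpha_2)$ for $\alpha_1$ gives $\alpha_1=\dfrac{2A+(A+2)\alpha_2}{2-A}$, which is (iii); here $\alpha_2$ and $A$ are already known from (i) and this computation.

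The computations are entirely elementary, so there is no genuine obstacle; the only point requiring care is the limit $\lambda_{n+1}-\lambda_n\to\pi$ used in (ii), which must be extracted from the asymptotic expansion of the zeros of $J_0$ and not merely from $\lambda_n\to\infty$. I would also note for completeness that the excluded homogeneous case $A_1=1+\alpha_2$ corresponds exactly to $\alpha_1=\alpha_2$, and that in the complementary inhomogeneous case $\alpha_1\neq\alpha_2$ the identity shows $A_n-1-\alpha_2\neq0$ for every $n$, so the ratios in (ii) and (iii) are well defined; finally $2-A=4(1+\alpha_2)/(2+\alpha_1+\alpha_2)$ is automatically positive since $\alpha_2\ge0$, so the denominator in (iii) never vanishes.
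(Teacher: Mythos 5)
Your proof is correct and follows essentially the same route as the paper: subtract $1+\alpha_2$ from the explicit formula for $A_n$, then take limits of that expression, of the ratio of consecutive terms, and of its rescaling by $e^{2\lambda_n h}/(1+\alpha_2)$, solving the resulting linear equation for $\alpha_1$. Your single central identity and your remark that (ii) requires $\lambda_{n+1}-\lambda_n\to\pi$ (rather than merely $\lambda_n\sim(n-\tfrac{1}{4})\pi$) only make explicit what the paper's terser computation leaves implicit.
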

\begin{proof}
(i) It is easy to show that ${\alpha _2} = \mathop {\lim }\limits_{n \to \infty } {A_n} - 1.$\\
(ii) We have
$$\frac{{{A_n} - 1-\alpha_2}}{{{A_{n + 1}} - 1-\alpha_2}} = \frac{{{e^{ - 2{\lambda _n}h}}}}{{{e^{ - 2{\lambda _{n + 1}}h}}}}\frac{{2 + {\alpha _1} + {\alpha _2} + ({\alpha _2} - {\alpha _1}){e^{ - 2{\lambda _{n + 1}}h}}}}{{2 + {\alpha _1} + {\alpha _2} + ({\alpha _2} - {\alpha _1}){e^{ - 2{\lambda _n}h}}}}.$$
Note that ${\lambda _n} \sim \left( {n - \frac{1}{4}} \right)\pi , \mbox{ when }n\to \infty$. We obtain
$$\mathop {\lim }\limits_{n \to \infty } \frac{{{A_n} - 1-\alpha_2}}{{{A_{n + 1}} - 1-\alpha_2}} = {e^{2\pi h}}.$$ 
Hence
$$h = \dfrac{1}{2\pi }\ln \left( {\mathop {\lim }\limits_{n \to \infty } \dfrac{{{A_n} - 1-\alpha_2}}{{{A_{n + 1}} - 1-\alpha_2}}} \right).$$
(iii) Since
$$A={\mathop {\lim }\limits_{n \to \infty } \frac{{\left( {{A_n} - 1-\alpha_2} \right){e^{2{\lambda _n}h}}}}{{1 + {\alpha _2}}} = \frac{{2({\alpha _1} - {\alpha _2})}}{{2 + {\alpha _1} + {\alpha _2}}}},$$
so ${\alpha _1} = (2A + (A + 2)\alpha _2)/(2 - A).$
\end{proof}
\begin{remark} We can reconstruct $h, \alpha_1$ from $\alpha_2, A_1, A_2$ as follows
\begin{align*}
h& = \dfrac{1}{2(\lambda_1-\lambda_2)}\ln\left(\frac{(A_1+1+\alpha_2)(A_2-1-\alpha_2)}{(A_1-1-\alpha_1)(A_2+1+\alpha_2)}\right)\\
\alpha_1&= \frac{A_1(2+\alpha_2(1+e^{-2\lambda_1h}))-(1+\alpha_2)(2+\alpha_2(1-e^{-2\lambda_2h}))}{(1+\alpha_2)(1+e^{-2\lambda_1h})-A_1(1-e^{-2\lambda_1h})}.
\end{align*}
\end{remark}

We now prove Theorem \ref{47}.
\begin{proof}
Firstly, for each $\gamma _\alpha, \gamma _\beta\in \mu(h,M), f\in H^{\frac{1}{2}}_{rad}(B)$ we have
$$\left\| {\left( \Lambda _\alpha  - \Lambda _\beta \right)f} \right\|_{H_{rad}^{ - \frac{1}{2}}(B)}^2=\sum\limits_{n = 1}^\infty  {\frac{{\lambda _n^2}}{{{{(1 + \lambda _n^2)}^{ \frac{1}{2}}}}}{\left( A_n - B_n \right)^2}{{|\hat f(n)|}^2}{{\left( {{J_1}({\lambda _n})} \right)}^2}}$$
where
$${A_n} =  -(1 + {\alpha _2}) \frac{{({\alpha _2} - {\alpha _1}){e^{ - 2{\lambda _n}h}} - \left( {2 + {\alpha _1} + {\alpha _2}} \right)}}{{({\alpha _2} - {\alpha _1}){e^{^{ - 2{\lambda _n}h}}} + 2 + {\alpha _1} + {\alpha _2}}},$$
$${B_n} =  - (1 + {\beta _2})\frac{{({\beta _2} - {\beta _1}){e^{ - 2{\lambda _n}h}} - \left( {2 + {\beta _1} + {\beta _2}} \right)}}{{({\beta _2} - {\beta _1}){e^{^{ - 2{\lambda _n}h}}} + 2 + {\beta _1} + {\beta _2}}}.$$
By direct computation we obtain
\begin{equation*}
{A_n} - {B_n} = \frac{{(A - B{e^{ - 2{\lambda _n}h}} - C{e^{ - 4{\lambda _n}h}})({\alpha _2} - {\beta _2}) + D{e^{ - 2{\lambda _n}h}}({\alpha _1} - {\beta _1})}}{{(2 + {\alpha _1} + {\alpha _2} + ({\alpha _2} - {\alpha _1}){e^{ - 2{\lambda _n}h}})(2 + {\beta _1} + {\beta _2} + ({\beta _2} - {\beta _1}){e^{ - 2{\lambda _n}h}})}},
\end{equation*}
where
\begin{align*}
A &= (2 + {\alpha _1} + {\alpha _2})(2 + {\beta _1} + {\beta _2}) \in {\rm{[}}4,4{(M + 1)^2}{\rm{]}},\\
B &= (2 + {\alpha _2} + {\beta _2})(2 + {\beta _1})\in {\rm{[}}4,4{(M + 1)^2}{\rm{]}},\\
C &= ({\alpha _2} - {\alpha _1})({\beta _2} - {\beta _1})\in {\rm{[}} - {M^2},{M^2}{\rm{]}},\\
D &= (2 + {\alpha _2} + {\beta _2})(2 + {\beta _2})\in {\rm{[}}4,4{(M + 1)^2}{\rm{]}}.
\end{align*}
We denote by $K_n$ and $H_n$ the numerator and denominator of $(A_n-B_n),$ respectively. We have $H_n\le (2+3M)^2$ and
\begin{align}\label{111}
\nonumber
{\left\| \Lambda _\alpha - \Lambda_\beta  \right\|_{H_{rad}^{\frac{1}{2}}(B) \to H_{rad}^{ - \frac{1}{2}}(B)}}& = \mathop {\sup }\limits_{f\in H_{rad}^{\frac{1}{2}}(B)\atop f \ne 0} \frac{{{{\left\| {\left( \Lambda _\alpha  - \Lambda _\beta  \right)f} \right\|}_{H_{rad}^{ - \frac{1}{2}}(B)}}}}{{{{\left\| f \right\|}_{H_{rad}^{\frac{1}{2}}(B)}}}}\\
 &\ge \mathop {\sup }\limits_{n\not=0} \frac{{\left| {{K_n}} \right|}}{{\left| {{2H_n}} \right|}} \ge \mathop {\sup }\limits_{n\not=0} \frac{{\left| {{K_n}} \right|}}{{{2\left( {2 + 3M} \right)^2}}},
\end{align}
Hence
$$\mathop {\sup }\limits_n \left| {{K_n}} \right| \ge  (A + Be^{ - 2\lambda_n h} + |C|e^{ - 4\lambda_n h})\left|\alpha _2 - \beta _2 \right| -  D e^{ - 2\lambda_n h}\left|\alpha_1 - \beta_1 \right|.$$
For $\alpha_2\ne \beta_2$, we choose $n$ big enough so that
\begin{equation}\label{112}
 \mathop {\sup }\limits_n  \left| K_n \right| \ge 2\left| {{\alpha _2} - {\beta _2}} \right|.
\end{equation}
From \eqref{112}, \eqref{111} becomes
\begin{equation}\label{40}
{\left\| \Lambda _\alpha - \Lambda_\beta \right\|_{H_{rad}^{\frac{1}{2}}(B) \to H_{rad}^{ - \frac{1}{2}}(B)}} \ge \frac{1}{{{{(2 + 3M)}^2}}}\left| {{\alpha _2} - {\beta _2}} \right|.
\end{equation}
For $\alpha_2=\beta_2$ we also have \eqref{40}.\\
It is easy to get
\begin{equation*}
|K_1|\ge 4e^{-2\lambda_1h}|\alpha_1-\beta_1|-4(M+1)^2(1+e^{-2\lambda_1h})^2|\alpha_2-\beta_2|.
\end{equation*}
Therefore, from \eqref{111} and \eqref{40} we have
\begin{equation}\label{113}
{\left\| \Lambda _\alpha - \Lambda_\beta \right\|_{H_{rad}^{\frac{1}{2}}(B) \to H_{rad}^{ - \frac{1}{2}}(B)}} \ge \frac{e^{-2\lambda_1h}}{{{{2(2 + 3M)}^2}(M+1)^2(1+e^{-2\lambda_1h})^2}}\left| {{\alpha _1} - {\beta _1}} \right|.
\end{equation}
 From \eqref{40} and \eqref{113} we are done.
\end{proof}

\bibliographystyle{amsplain}

\end{document}